\def\le{\leqslant}
\def\ge{\geqslant}
\def\leq{\leqslant}
\def\geq{\geqslant}
\newcommand\vol{\operatorname{vol}}
\begin{document}

\newtheorem{theorem}{Theorem}
\newtheorem{lemma}[theorem]{Lemma}
\newtheorem{claim}[theorem]{Claim}
\newtheorem{cor}[theorem]{Corollary}
\newtheorem{prop}[theorem]{Proposition}
\newtheorem{definition}{Definition}
\newtheorem{question}[theorem]{Open Question}
\newtheorem{example}[theorem]{Example}
\newtheorem{rem}[theorem]{Remark}

\numberwithin{equation}{section}
\numberwithin{theorem}{section}
\numberwithin{table}{section}

\numberwithin{figure}{section}

\newcommand{\commL}[2][]{\todo[#1,color=orange!60]{L: #2}}
\newcommand{\commI}[2][]{\todo[#1,color=green!60]{I: #2}}
\newcommand{\commII}[2][]{\todo[#1,color=red!60]{I: #2}}

\newcommand{\ccr}[1]{{\color{red} #1}}
\newcommand{\ccm}[1]{{\color{magenta} #1}}
\newcommand{\ccc}[1]{{\color{cyan} #1}}
\newcommand{\cco}[1]{{\color{orange} #1}}
\newcommand{\ccg}[1]{{\color{gray} #1}}

\newcommand{\bflambda}{{\boldsymbol{\lambda}}}
\newcommand{\bfmu}{{\boldsymbol{\mu}}}
\newcommand{\bfxi}{{\boldsymbol{\xi}}}
\newcommand{\bfrho}{{\boldsymbol{\rho}}}

\renewcommand{\u}{\mathbf{u}}
\renewcommand{\v}{\mathbf{v}}
\newcommand{\SL}{\mathrm{SL}}
\newcommand{\GL}{\mathrm{GL}}
\def\ZK{\cO_\K}

 \newcommand{\F}{\mathbb{F}}
\newcommand{\K}{\mathbb{K}}
\newcommand{\R}{\mathbb{R}}
\newcommand{\D}[1]{D\(#1\)}
\def\scr{\scriptstyle}
\def\\{\cr}
\def\({\left(}
\def\){\right)}
\def\[{\left[}
\def\]{\right]}
\def\<{\langle}
\def\>{\rangle}
\def\fl#1{\left\lfloor#1\right\rfloor}
\def\rf#1{\left\lceil#1\right\rceil}
\def\le{\leqslant}
\def\ge{\geqslant}
\def\eps{\varepsilon}
\def\mand{\qquad\mbox{and}\qquad}

\def \e{\mathbf{e}}

\newcommand{\ord}{\mathrm{ord}}

\newcommand{\Fq}{\mathbb{F}_q}
\newcommand{\Fp}{\mathbb{F}_p}
\newcommand{\Disc}[1]{\mathrm{Disc}\(#1\)}
\newcommand{\Res}[1]{\mathrm{Res}\(#1\)}
\newcommand{\QQ}{\mathbb{Q}}
\newcommand{\ZZ}{\mathbb{Z}}

\newcommand\fp{\mathfrak{p}}
\newcommand\fq{\mathfrak{q}}

\newcommand{\p}{\mathfrak{p}}

\renewcommand{\Re}{\mathrm{Re}}
\def\GL{\mathrm{GL}}
\def\cA{{\mathcal A}}
\def\cB{{\mathcal B}}
\def\cC{{\mathcal C}}
\def\cD{{\mathcal D}}
\def\cE{{\mathcal E}}
\def\cF{{\mathcal F}}
\def\cG{{\mathcal G}}
\def\cH{{\mathcal H}}
\def\cI{{\mathcal I}}
\def\cJ{{\mathcal J}}
\def\cK{{\mathcal K}}
\def\cL{{\mathcal L}}
\def\cM{{\mathcal M}}
\def\cN{{\mathcal N}}
\def\cO{{\mathcal O}}
\def\cP{{\mathcal P}}
\def\cQ{{\mathcal Q}}
\def\cR{{\mathcal R}}
\def\cS{{\mathcal S}}
\def\cT{{\mathcal T}}
\def\cU{{\mathcal U}}
\def\cV{{\mathcal V}}
\def\cW{{\mathcal W}}
\def\cX{{\mathcal X}}
\def\cY{{\mathcal Y}}
\def\cZ{{\mathcal Z}}

\def\Q{\mathbb{Q}}
\def\Z{\mathbb{Z}}
\def\N{\mathbb{N}}

\newcommand{\sR}{\ensuremath{\mathscr{R}}}
\newcommand{\sDI}{\ensuremath{\mathscr{DI}}}
\newcommand{\DI}{\ensuremath{\mathrm{DI}}}

\newcommand{\Nm}[1]{\mathrm{Norm}_{\,\F_{q^k}/\Fq}(#1)}

\newcommand{\Tr}[1]{\mathrm{Tr}\(#1\)}
\newcommand{\rad}[1]{\mathrm{rad}(#1)}

\newcommand{\Orb}[1]{\mathrm{Orb}\(#1\)}
\newcommand{\aOrb}[1]{\overline{\mathrm{Orb}}\(#1\)}

\renewcommand{\v}{\mathrm{v}}

\def\vec#1{\mathbf{#1}}
\def\vh{\vec{h}}
\def\vv{\vec{v}}
\def\vu{\vec{u}}
\def\vw{\vec{w}}

\def\fu{\mathfrak{U}}

\title[Matrix  pseudorandom number generator]
{Distribution of recursive matrix pseudorandom 
number generator modulo prime powers}

 \author[L. M{\'e}rai]{L{\'a}szl{\'o} M{\'e}rai}
\address{L.M.: Johann Radon Institute for
Computational and Applied Mathematics,
Austrian Academy of Sciences, Altenberger Stra\ss e 69, A-4040 Linz, Austria and Department of Computer Algebra, E\"otv\"os Lor\'and University,  H-1117 Budapest, Pazm\'any P\'eter s\'et\'any 1/C, Hungary}
\email{laszlo.merai@oeaw.ac.at}

\author[I.~E.~Shparlinski]{Igor E. Shparlinski}
\address{I.E.S.: School of Mathematics and Statistics, University of New South Wales.
Sydney, NSW 2052, Australia}
\email{igor.shparlinski@unsw.edu.au}

\begin{abstract}
Given a matrix $A\in \GL_d(\Z)$. 
We study the pseudorandomness of vectors $\vu_n$ generated by a linear  recurrent 
relation of the form 
$$
\vu_{n+1} \equiv A \vu_n \pmod {p^t}, \qquad n = 0, 1, \ldots, 
$$
modulo $p^t$ with a fixed prime $p$ and  sufficiently large integer $t \ge 1$. 
 We study such sequences over very short segments of length which 
is not accessible via previously used methods. 
Our  technique  is based on the method of N.~M.~Korobov (1972) of 
 estimating double Weyl sums and a fully explicit 
 form of the Vinogradov  mean value theorem due to K.~Ford (2002). 
 This is combined with some ideas from the work of I.~E.~Shparlinski  (1978) which allows to
 construct polynomial representations of the coordinates of $\vu_n$
 and 
 control the $p$-adic orders of  their coefficients in polynomial representation. 
 \end{abstract} 

\keywords{Matrix congruential pseudorandom numbers, prime powers, exponential sums, Vinogradov  mean value theorem}
\subjclass[2010]{11K38, 11K45, 11L07}

\pagenumbering{arabic}

\maketitle

\section{Introduction}

\subsection{Recursive congruential pseudorandom numbers}

Let $p$ be a fixed prime number (in particular, all implied constants are allowed to depend on $p$). 
Let $t\geq 1$ be an integer and let  $\cR_t = \Z/p^t\Z$ be the residue ring  modulo $p^t$, 
which we always assume to be represented by the set $\{0, \ldots, p^t-1\}$. 

Given a matrix $A\in \GL_d(\Z)$ we consider the sequence 
 \begin{equation}
\label{eq:Seq Vec}
\fu
=\(\vu_n\)_{n=0}^\infty,  \qquad \vu_n \in \cR_t^d, 
\end{equation}
of  $d$ dimensional vectors 
over $\cR_t$, generated iteratively by the  relation 
 \begin{equation}
\label{eq:u=Au}
\vu_{n+1} \equiv A \vu_n \qquad n = 0, 1, \ldots, 
\end{equation}
for some initial vector $\vu=\vu_0$, such that at least one component of $\vu_0$ is a unit in $\cR_t$. 
More explicitly, we can also write 
 \begin{equation}
\label{eq:u=An}
\vu_{n} \equiv A^n \vu_0 \qquad n = 0, 1, \ldots
\end{equation}

It is easy to see that the vectors~\eqref{eq:Seq Vec}  
satisfy a linear recurrent relation 
 \begin{equation}
\label{eq:LRS-Vec}
 \vu_{n+d} \equiv a_{d-1}  \vu_{n+d-1}  +  \ldots +  a_0 \vu_{n}  \pmod {p^t}, \qquad n = 0, 1, \ldots, 
\end{equation}
 where 
 \begin{equation}
\label{eq:CharPoly}
 f(X) = X^d - a_{d-1}  X^{d-1}  - \ldots  -  a_0 \in \Z[X]
\end{equation}
is  the {\it characteristic polynomial\/} of $A$.  In particular,  the components of vectors~\eqref{eq:Seq Vec}, 
more generally any sequence of the form $\vv A^n \vu_0$, satisfies a scalar analogue of~\eqref{eq:LRS-Vec}
(hereafter we always assume the vectors are oriented the way to make vector multiplication possible). 

In fact, our investigation of the vector sequences~\eqref{eq:Seq Vec}  is based on studying congruences 
and exponential sums with scalar linear recurrent sequences 
 \begin{equation}
\label{eq:Seq}
u=\(u_n\)_{n=1}^\infty \in \cR_t^\infty
\end{equation}
generated by the relation
 \begin{equation}
\label{eq:LRS}
 u_{n+d} \equiv a_{d-1}  u_{n+d-1}  +  \ldots +  a_0 u_{n}  \pmod {p^t},  \quad 0 \le u_n < p^t, 
\end{equation}
for $n = 0, 1, \ldots$, with some {\it initial values\/} $u_0, \ldots u_{d-1} \in \cR_t$.

Sequences generated as in~\eqref{eq:u=Au}, or in~\eqref{eq:Seq} or in~\eqref{eq:LRS} 
are common sources of pseudorandom vectors and have been studied in a number 
of works, see~\cite[Section~10.1]{Nied} and references therein. 
However, reasonably elementary methods used before, 
are not capable to provide any nontrivial information about the distribution of such sequences. 
The only exception is the result of~\cite{Shp}, which however applies only to the full period $\tau_t$ and 
implies that, under some natural conditions,  the distribution of the fractional parts $\{u_n/p^t\}$, 
$n =1, \ldots, \tau_t$, deviates from the uniform distribution by no more that $\tau_t^{-1/d+o(1)}$ 
as $t \to \infty$. See also~\eqref{eq:full per} for a formal statement.

Here we build on the ideas on~\cite{Kor} and~\cite{Shp}, but also enhance them with several  new arguments.

We note that our results are of similar strength as those obtained 
in~\cite{MeSh1} for the called  inversive generator. In particular, as in~\cite{MeSh1} 
we reduce our sum to a family of {\it Weyl sums\/} with polynomials of growing degree, 
thus, also as in~\cite{MeSh1}, the recent breakthrough in this direction, see, for example,~\cite{Bou3}, do not apply to our setting.
However, quite differently 
to~\cite{MeSh1}, for matrix generators, we do not control the $p$-divisibility 
of every coefficient of these polynomials. Instead we use some ideas of~\cite{Shp}
to investigate joint
$p$-divisibility in groups of $d$ consecutive coefficients.

\subsection{Our results}
\label{sec:results}

Following the standard approach to establishing
the uniformity 
of distribution results, we first  obtain an upper bound for the exponential sums.

Let $A\in \GL_d(\Z)$ and  $\vu,\vv\in\Z^d$ be nonzero vectors. We consider the sum
$$
S_{p^t}(N; \vu,\vv)=\sum_{n=0}^{N-1}\e \left(\frac{1}{p^{t}} \vv A^n \vu\right), 
$$
where, as usual, we denote $\e(z) = \exp(2\pi i z)$.

Using the method of Korobov~\cite{Kor} together with  the use of the  Vinogradov  mean value theorem in
the explicit form given by Ford~\cite{Ford}, 
we can estimate $S_{p^t}(N; \vu,\vv)$ for the values $N$ in the range~\eqref{eq:MS range}
\begin{equation}
\label{eq:MS range}
\tau_t \ge N \ge \exp\(c \(t \log p\)^{2/3}\)
\end{equation}
for some absolute constant $c>0$ depending only $p$ and  the characteristic 
polynomial~\eqref{eq:CharPoly}
of $A$, provided it satisfies some natural conditions.

We remark the range~\eqref{eq:MS range} covers the values of $N$ 
which are smaller than any power of the period $\tau_t$.

We recall our assumption that~\eqref{eq:LRS} is the shortest linear recurrence relation
satisfied by $(u_n)_{n=1}^\infty$, hence the polynomial $f$ in~\eqref{eq:CharPoly} is indeed
the characteristic (also sometimes called minimal) polynomial of this sequence. 

We say that a polynomial $f \in \Z[X]$ is {\it nondegenerate\/} if among its roots $\lambda_1, \ldots, \lambda_s$  and their 
nontrivial ratios $\lambda_i/\lambda_j$, $1 \le i< j \le s$, there are no roots of unity.

Throughout the paper we always use the parameter 
\begin{equation}
\label{eq:rho}
\rho = \frac{\log N}{t \log p}
\end{equation}
which controls the size of $N$ relative to the modulus $p^t$ on a logarithmic scale.

We also say that the pair of vectors $(\vu, \vv) \in \cR_t^d \times  \cR_t^d$ is {\it proper\/}, if the sequence
$u_n = \vv A^n \vu$ does not satisfy of any linear recurrence modulo $p$ of length less that $d$. 

Finally, we call  $\vu = (u_0,\dots, u_{d-1}) \in \Z^d$ \emph{$p$-primitive} if
$$
\gcd\(u_0, \ldots, u_{d-1}, p\) = 1.
$$

\begin{theorem}\label{thm:main} Assume  that the characteristic polynomial $f$ in~\eqref{eq:Seq} is nondegenerate,   
has no multiple roots modulo $p$ and $\gcd(a_0,p)=1$. 
Then, uniformly over pair of proper vectors $(\vu, \vv) \in \cR_t^d \times  \cR_t^d$, for $1 \le N \leq \tau_t$ we have
$$
\left|S_{p^t}(N; \vu,\vv) \right| \leq c  N^{1- \eta\rho^2 / (d^4 (\log d)^2)}, 
$$
where $\rho$ is given by~\eqref{eq:rho}, 
for some constant $c$ depending only on $f$ and $p$,  and 
some absolute constant $\eta>0$. 
\end{theorem}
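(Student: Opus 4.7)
The argument combines three ingredients: Korobov's \cite{Kor} reduction of exponential sums from recurrent generators to double Weyl sums, Ford's \cite{Ford} explicit form of the Vinogradov mean value theorem, and the $p$-adic bookkeeping from \cite{Shp}---adapted to the matrix setting---needed to ensure the resulting Weyl sums have close-to-maximal effective modulus $p^t$. The main novelty compared with the scalar case treated in \cite{Shp} is that for the matrix generator we can only control the joint $p$-divisibility of $d$ consecutive coefficients, which is the source of the factor $d^4$ in the exponent.

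Under our assumptions (simple roots of $f$ modulo $p$ and $\gcd(a_0,p)=1$), the order $\tau_1$ of $A$ modulo $p$ is $O(1)$, and Hensel lifting yields, for every $s\ge 0$, a representation
$$
A^{\tau_1 p^s}=I+p^{s+1}B_s,\qquad B_s\in\Z^{d\times d},\ B_s\not\equiv 0\pmod p.
$$
Setting $K=\tau_1 p^s$, $r=\lceil t/(s+1)\rceil$ and $M=\lfloor N/K\rfloor$, and writing each $n<N$ uniquely as $n=mK+l$ with $0\le l<K$ and $0\le m<M$, the binomial theorem modulo $p^t$ expands
$$
\vv A^n\vu\equiv\sum_{j=0}^{r-1}\binom{m}{j}p^{(s+1)j}c_{l,j}\pmod{p^t},\qquad c_{l,j}:=\vv A^l B_s^j\vu,
$$
and so $S_{p^t}(N;\vu,\vv)=\sum_{l=0}^{K-1}W_l$, where each $W_l=\sum_{m<M}\e(P_l(m)/p^t)$ is a Weyl sum in a polynomial $P_l\in\Z[m]$ of degree less than $r$ whose coefficients are built from the integers $c_{l,j}$.

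The hardest step is to extract a power saving from $W_l$ that is uniform enough to survive the summation over $l$. The key input is a bound, for each fixed $j$, on the joint $p$-adic valuation of the $d$-tuples $(c_{l,j},c_{l+1,j},\ldots,c_{l+d-1,j})$ across blocks of $d$ consecutive indices $l$. Using the minimality of $f$ (which guarantees that the $\Z[A]$-module generated by $\vu$ has full rank) together with the properness of $(\vu,\vv)$, one shows this joint valuation is bounded by an absolute constant $D=D(f,p)$, so that in every block of length $d$ there is at least one index $l^*$ for which $P_{l^*}$ has a coefficient of degree $\asymp r$ whose denominator is comparable to $p^t$. Applying Ford's inequality to such $P_{l^*}$ yields $|W_{l^*}|\ll M^{1-\gamma/(r^2\log r)}$ for an absolute $\gamma>0$, and a H\"older-type averaging in $l$ transfers this saving to the full sum, at the cost of an extra polynomial-in-$d$ loss arising from the block length.

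Optimising the free parameter $s$, equivalently the degree $r\asymp t/s$ and the outer length $M\asymp Np^{-s}$, against the constraints of Ford's theorem and of \eqref{eq:MS range}, converts the resulting estimate into the stated $N^{1-\eta\rho^2/(d^4(\log d)^2)}$. The decisive obstacle throughout is the block-wise $p$-adic bookkeeping for the $c_{l,j}$; once this is in place the application of Ford's bound and the final parameter optimisation are essentially routine, but require careful tracking of constants to stay within the range \eqref{eq:MS range}.
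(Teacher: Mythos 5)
Your high-level plan---binomial expansion modulo $p^t$, Ford's explicit Vinogradov mean value theorem, and $p$-adic control of the resulting coefficients---matches the paper in spirit, and the Hensel-lift $A^{\tau_1 p^s}=I+p^{s+1}B_s$ is an acceptable matrix analogue of the paper's eigenvalue expansion $\gamma_i^{\tau_s}=1+\vartheta_i$. However, the $p$-adic bookkeeping step as you describe it has a genuine gap. You fix a degree $j$ and control the joint $p$-adic valuation of $(c_{l,j},\dots,c_{l+d-1,j})$ over blocks of $d$ consecutive shifts $l$; this tells you that roughly a $1/d$ fraction of the polynomials $P_l$ have a well-conditioned top coefficient, so you get a power saving in $W_{l^*}$ for those $l^*$. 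That is not enough: bounding the remaining $(1-1/d)K$ sums $W_l$ trivially by $M$ already gives $\sum_l|W_l|\asymp N$, and no H\"older-type averaging can repair this, because you have no nontrivial upper bound on the bad $W_l$ to trade against. The blocking has to go in the other direction. In Lemma~\ref{lem:nondiv} the paper shows that, for each \emph{fixed} $n$, every block of $d$ \emph{consecutive degrees} $(h_{n,j},\dots,h_{n,j+d-1})$ contains a coefficient of bounded $p$-adic valuation; this yields $\asymp r/(2d)$ good coefficients in the upper half of each polynomial and hence a saving for \emph{every} Weyl sum, which is what makes the trivial summation over $n$ acceptable. The proof of that lemma (Vandermonde determinant in the shifts $\vartheta_i$, Lemma~\ref{lemma:determinant}, and the nondegeneracy hypothesis via Lemma~\ref{lem:per growth}) is the actual content of the ``$p$-adic bookkeeping'' and is not supplied by your sketch.

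A secondary structural divergence: you decompose $n=mK+l$ and arrive at single Weyl sums $W_l=\sum_{m<M}\e(P_l(m)/p^t)$, while the paper uses Korobov's averaging trick (Lemma~\ref{lemma:sum}), replacing $n$ by $n+\tau_s yz$, to produce double sums in $(x,y)$ with polynomials of the special shape $\sum_j H_{n,j}p^{sj}(xy)^j$. This shape is exactly what Korobov's Lemma~\ref{lem:Kor} handles, and that lemma is what converts Ford's mean-value bound $N_{k,r}(M)$ into a pointwise estimate, using \emph{all} the good coefficients in the range $j\in[(r+1)/2,r]$, not only one near the top. Ford's theorem on its own is a mean-value result and does not directly bound a Weyl sum; a single-variable route would require writing out the Vinogradov--Karatsuba completion argument, which your proposal omits. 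Finally, the $d^4$ in the exponent is only partly due to the block length $d$; three further powers of $d$ come from the recursive shortening $N\to N_0=\lfloor N^{r/(4c_0d)}\rfloor$ used when $r<c_0d$, a case your outline does not address.
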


We now  note that if $f$ is irreducible modulo $p$, they any    $p$-primitive  vectors $\vu,\vv \in \cR_t^d$  
form a proper pair. 

For a sequence $\fu=(\u_n)_{n= 0}^\infty$
defined by~\eqref{eq:u=Au} or more explicitly 
as in~\eqref{eq:u=An}, we denote by  $D_{\fu}(N)$ the  \emph{discrepancy}
of the vectors
\begin{equation}
\label{eq:LN-segment}
p^{-t}  \vu_n \in [0,1)^d, \qquad n = 1, \ldots, N,
\end{equation}
which 
 is defined by
\begin{equation}
\label{eq:Discr N}
D_{\fu}(N)=\sup_{J\subset[0,1)}\left|\frac{A_\fu(\cB,N)}{N}- \vol \cB \right|,
\end{equation}
where the supremum is taken over all boxes  $\cB$ of the form
$$
\cB = [\alpha_1, \beta_1] \times \ldots  \times [\alpha_d, \beta_d]\subseteq  [0,1)^d,
$$
and  $A_\fu(N,\cB)$ is the number of point~\eqref{eq:LN-segment}
which fall in $\cB$, 
and 
$$ 
\vol \cB = ( \beta_1-\alpha_1)   \cdots(\beta_d-\alpha_d)
$$ 
is the volume of $\cB$. 
The celebrated {\it Koksma--Sz\"usz inequality\/}~\cite{Kok,Sz} 
(see also~\cite[Theorem~1.21]{DrTi}) 
allows us to give an upper bound on the discrepancy $D_\fu(N)$ in terms of exponential sums
estimated in Theorem~\ref{thm:main}.

\begin{theorem}\label{thm:discrepancy}
Assume  that the characteristic polynomial $f$ in~\eqref{eq:Seq} is nondegenerate and irreducible modulo $p$. If $\vu$ is $p$-primitive,
then for $1 \le N \leq \tau_t$ we have 
$$
 D_{\fu}(N) \leq c_0  N^{-\eta_0 \rho ^2/(d^4 (\log d)^2)} (\log N)^d
$$
for some constant $c_0$ depending only on $f$ and $p$, and 
some absolute constant $\eta_0>0$. 
\end{theorem}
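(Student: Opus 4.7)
The plan is to combine the exponential sum estimate of Theorem~\ref{thm:main} with the Koksma--Sz\"usz inequality and then optimise over the cutoff parameter. First, I would invoke the Koksma--Sz\"usz inequality~\cite[Theorem~1.21]{DrTi} applied to the points $p^{-t}\vu_n\in[0,1)^d$, $n=1,\dots,N$: for any integer $H\ge 1$ it gives
\begin{equation*}
D_\fu(N) \ll \frac{1}{H} + \frac{1}{N}\sum_{0<\|\vh\|_\infty\le H}\frac{1}{r(\vh)}\left|\sum_{n=1}^{N}\e\left(\frac{\vh\cdot\vu_n}{p^{t}}\right)\right|,
\end{equation*}
where $r(\vh)=\prod_{i=1}^d\max(1,|h_i|)$ and the implied constant depends only on $d$. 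Since $\vh\cdot\vu_n=\vh A^n\vu$, each inner sum is precisely $S_{p^t}(N;\vu,\vh)$.

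Next, for each nonzero $\vh$ with $\|\vh\|_\infty\le H$, I would factor $\vh=p^{\ell}\vh'$ with $\vh'$ a $p$-primitive integer vector and $0\le\ell<t$; the choice $H<p^t$ forces $\ell<t$. This yields
\begin{equation*}
S_{p^t}(N;\vu,\vh)=S_{p^{t-\ell}}(N;\vu,\vh').
\end{equation*}
Since $f$ is irreducible modulo $p$ and both $\vu$ and $\vh'$ are $p$-primitive, the pair $(\vu,\vh')$ is proper by the remark following Theorem~\ref{thm:main}. Applying that theorem with modulus $p^{t-\ell}$, and noting $\rho_\ell:=\log N/((t-\ell)\log p)\ge\rho$, produces the uniform estimate
\begin{equation*}
|S_{p^{t-\ell}}(N;\vu,\vh')|\le c\,N^{1-\eta\rho^2/(d^4(\log d)^2)}.
\end{equation*}

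Finally, combining this with the standard bound $\sum_{0<\|\vh\|_\infty\le H}1/r(\vh)\ll(\log H)^d$ gives
\begin{equation*}
D_\fu(N)\ll \frac{1}{H}+(\log H)^d\,N^{-\eta\rho^2/(d^4(\log d)^2)}.
\end{equation*}
The choice $H\asymp N^{\eta\rho^2/(d^4(\log d)^2)}$ (which lies in $[1,N]$ and, as one checks from $\rho\le d$, satisfies $H<p^t$) balances both terms and delivers the claimed bound with $\eta_0=\eta$, after absorbing constants into $c_0$. The one delicate point is that Theorem~\ref{thm:main} formally requires $N\le\tau_{t-\ell}$, whereas we only have $N\le\tau_t$; for $\ell$ close to $t$ the reduced period can be smaller than $N$, and one must then slice the inner sum into full periods of length $\tau_{t-\ell}$ plus a short residual tail and estimate each piece by periodicity together with a short-range application of Theorem~\ref{thm:main}. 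This bookkeeping is the only step requiring real care; everything else is a routine assembly of the ingredients already established.
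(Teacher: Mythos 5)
Your proposal is correct and follows essentially the same route as the paper: Koksma--Sz\"usz, factor out the $p$-power from $\vh$ to reduce to a proper pair, apply Theorem~\ref{thm:main} at the reduced modulus (noting the exponent only improves), and handle $N>\tau_{t-\ell}$ by splitting into full periods plus a tail. The only cosmetic difference is that you optimize the cutoff $H\asymp N^{\eta\rho^2/(d^4(\log d)^2)}$, whereas the paper simply fixes $V=\lfloor N^{1/4}\rfloor$ and observes that $N^{-1/4}$ is dominated by the main term; both choices give the same final bound, and your smaller cutoff actually makes the period-reduction step ($\tau_{t-\ell}\gg p^{t-\ell}\ge N/H$) even less delicate than in the paper.
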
 

Writing 
$$
N^{-\rho ^2} = \exp\(-\frac{(\log N)^3}{(t \log p)^2} \)
$$
we see that Theorems~\ref{thm:main} and~\ref{thm:discrepancy} are nontrivial in the range~\eqref{eq:MS range}.

\subsection{Related results}

We first recall that in the case of $d=1$
results of the similar strength as of 
Theorems~\ref{thm:main} and~\ref{thm:discrepancy}  are given by Korobov~\cite{Kor}. 

On the other hand, for arbitrary $d$
but only in the case of sums over the full period, that is, 
for $N = \tau_t$, the bound 
\begin{equation}
\label{eq:full per exp} 
\left|S_{p^t}(\tau_t; \vu,\vv) \right|  \le \tau_t^{1-1/d +o(1)}, \qquad t \to \infty,
\end{equation}
has been derived in~\cite{Shp}, which can be used to derive, 
under the conditions of  Theorem~\ref{thm:discrepancy}, 
the following bound on the discrepancy:
\begin{equation}
\label{eq:full per} 
D_{\fu}(\tau_t)  \le \tau_t^{-1/d +o(1)}, \qquad t \to \infty,
\end{equation}
see also~\cite[Equation~(7.8)]{EvdPSW}.

Furthermore for the counting function $A_\fu(N,\cB)$ in the definition of the discrepancy in~\eqref{eq:Discr N},
for  $N = \tau_t$ and cubes $\cB = [\alpha, \alpha+\delta]^d$ one has 
$$
 A_\fu(\tau_t,\cB) = O(\tau_t \delta)
 $$
 (with an implied constant depending only on $f$ and $p$), which is better than~\eqref{eq:full per} 
 for small $\delta =o( \tau_t^{-1/d})$, see~\cite[Theorem~7.6]{EvdPSW}. Furthermore, 
 for dimensional problems, or, equivalently for $\cB = [\alpha, \alpha+\delta]\times [0,1]^{d-1}$, 
 we have the following lower bound
 $$
 A_\fu(\tau_t,\cB) \ge  \frac{1}{d} \tau_t \delta + O(1),
 $$
see~\cite[Theorem~7.5]{EvdPSW}, which again is better than~\eqref{eq:full per} 
 for $\delta =o( \tau_t^{-1/d})$.

Furthermore, following the ideas of~\cite{MeSh2}, 
one can also study the distribution of vectors $\vu_\ell$ with prime indices $\ell$, 
see also~\cite{KeMeSh}.

Studying the distribution of sequences~\eqref{eq:u=Au} modulo a large prime is certainly a
very interesting question, which is also related to a discrete model of {\it quantum chaos\/}, 
see~\cite{Rud}. However in this setting, besides some very elementary bounds, stronger 
results are only know for $A \in \SL_d(\Z)$~\cite{OSV}, and especially for $d=2$, 
see~\cite{Bou1.5,KuRu,OSV}.

\subsection{Notation}
We recall that the notations $U\ll V$, and $V\gg U$ are 
equivalent to the statement that the inequality $|U |\leq cV$ holds with some
absolute constant $c>0$. 
 
We use the notation $\nu_p$ to denote the $p$-adic valuation, that is, for non-zero integers $a\in\Z$ we
 let $\nu_p(a)=k$ if $p^k$ is the highest power of $p$ which divides $a$, and $\nu_p(a/b)=\v_p(a)-\v_p(b)$ for $a,b\neq 0$.
 We also use $\nu_\fp$ for a similarly defined $\fp$-adic valuation when $\fp$ is a prime ideal of a number field. 
 
 For an integer ideal $\fq$  we write $\ord_\fq \gamma$ to denotes the order of an algebraic integer $\gamma$ (relatively prime 
 to $\fq$) modulo $\fq$. That is, $\ord_\fq \gamma$ is the smallest integer $t$ such that $\gamma^t \equiv 1 \pmod \fq$. 
 Similarly, we also use $\ord_\fq A$ for the order  of  a matrix $A$ modulo $\fq$ (provided 
 $\det A$ is relatively prime to $\fq$).

 We denote the cardinality of a finite set $\cS$ by $\#\cS$.
 
 We  use the notation $\lfloor x \rfloor$ and $\lceil x \rceil$ for the largest integer no larger than $x$ and the smallest integer no smaller than $x \in \R$, respectively. We then write $\{x\}=x- \lfloor x \rfloor \in [0,1)$ for the fractional part of $x$. 
 
 For a ring $\cR$ we use $\cR^{d\times d}$ to denote the ring of matrices over $\cR$. 
  
\section{Background on linear recurrence sequences} 
\subsection{Orders of algebraic integers modulo powers of prime id\-eals} 
Assume, that $\det A\not \equiv 0 \pmod p$ and $A$ is diagonalizable with eigenvalues $\gamma_1, \ldots, \gamma_d \in \overline{\Q}$. 
Let $\K=\Q(\gamma_1,\ldots, \gamma_d)$ be the splitting field of the characteristic polynomial $f$ of $A$ and let $\ZK$
be its ring of integers.  In particular, $\gamma_1,\ldots, \gamma_d \in \ZK$. 
Then for $\vu, \vv \in \Z^d$ we have
\begin{equation}
\label{eq:LRS=PowSum} 
u_n = \vv A^n \vu =  \frac{1}{\det A} \(\alpha_1\gamma_1^n+\ldots,+ \alpha_d\gamma_d^n\), \quad n\geq 0,
\end{equation}
for some $\alpha_1,\ldots, \alpha_d\in \ZK$, see~\cite[Section~1.1.7]{EvdPSW}.

We always assume, that $f$ has no multiple roots modulo $p$ (and thus its discriminant is not divisible by $p$).
Therefore $p$ is unramified and we can factor  $p$  in the product of several 
distinct prime ideals. 

We now fix a prime ideal  $\fp\subseteq \ZK$ which divides $p$ and 
use $\Z_\fp$ to denote the ring $\fp$-adic integers, see~\cite[Section~6.4]{Gouv} 
for a background. 

We observe that for any $w \in \Z$  and $s \in \N$, the divisibilities $p^s\mid w$ and $\fp^s\mid w$ 
are equivalent.  In particular
$$
\ord_{\fp^s} A = \ord_{p^s} A = \tau_s.
$$

\begin{lemma}\label{lemma:determinant}
Let $\fp\subseteq \ZK$ be a prime ideal and  let $M\in\mathrm{GL}_d( \Z_\fp)$. Assume that  $\vu\in \Z_\fp^{d}$
is such that $\vu \not \equiv (0,\ldots, 0) \pmod \fp$. 
If $M\vu\equiv (0,\ldots, 0) \pmod {\fp^w}$, then $\det M \equiv 0 \pmod {\fp^w}$.
\end{lemma}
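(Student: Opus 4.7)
The plan is to exploit the classical adjugate identity, which holds over any commutative ring and in particular over $\Z_\fp$. Namely, for any $d \times d$ matrix $M$ with entries in $\Z_\fp$ we have
$$
\mathrm{adj}(M) \cdot M = \det M \cdot I_d,
$$
where $\mathrm{adj}(M) \in \Z_\fp^{d\times d}$ is the classical adjugate, whose entries are polynomial expressions in the entries of $M$ and thus lie in $\Z_\fp$.

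First, I would apply both sides of the identity to the vector $\vu$, obtaining the equality
$$
\mathrm{adj}(M) \cdot (M\vu) = \det M \cdot \vu
$$
in $\Z_\fp^d$. Since by hypothesis $M\vu \equiv (0,\ldots,0) \pmod{\fp^w}$ and the entries of $\mathrm{adj}(M)$ are in $\Z_\fp$, the left-hand side is congruent to the zero vector modulo $\fp^w$. Hence $\det M \cdot \vu \equiv (0,\ldots,0) \pmod{\fp^w}$, which means
$$
\nu_\fp(\det M \cdot u_i) \ge w \qquad \text{for each } i = 1, \ldots, d,
$$
where $u_i$ are the coordinates of $\vu$.

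Second, I would use the assumption $\vu \not\equiv (0,\ldots,0) \pmod{\fp}$, which gives at least one index $i_0$ with $\nu_\fp(u_{i_0}) = 0$, that is, $u_{i_0}$ is a $\fp$-adic unit. For this coordinate the previous estimate becomes $\nu_\fp(\det M) \ge w$, which is precisely the conclusion $\det M \equiv 0 \pmod{\fp^w}$.

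There is really no hard step here; the only thing worth being careful about is that the adjugate identity is purely algebraic and requires no invertibility of $M$ (so the hypothesis on $M$ just guarantees that its entries lie in $\Z_\fp$, allowing us to multiply by $\mathrm{adj}(M)$ while remaining in $\Z_\fp^d$). The role of the non-vanishing hypothesis on $\vu$ is exactly to supply one unit coordinate that can be cancelled at the end.
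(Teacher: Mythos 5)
Your proof is correct and is essentially the same as the paper's: the matrix $M^*$ that the paper introduces via $M^{-1} = (\det M)^{-1} M^*$ is precisely $\mathrm{adj}(M)$, and both arguments multiply $M\vu$ by this adjugate to obtain $\det M \cdot \vu \equiv 0 \pmod{\fp^w}$ and then cancel a unit coordinate of $\vu$. Your remark that the adjugate identity needs no invertibility is a minor expository improvement, but the substance is identical.
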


\begin{proof} 
Write $M^{-1} =\( \det M\)^{-1} M^{*} $ with $M^{*}\in  \Z_\fp^{d \times d}$. If  
$$M\mathbf{u}\equiv (0,\ldots, 0) \pmod {\fp^w},
$$ 
then 
$$  \det M  \cdot \vu = M^* M \vu  \equiv (0,\ldots, 0) \pmod {\fp^w}.
$$ As $\vu$ has a component which is non-zero modulo $\fp$, we conclude that $\det M \equiv 0 \pmod {\fp^w}$.
\end{proof}

For $\gamma,\lambda \in \ZK$, which are relatively prime to a prime ideal $\fp$ we define $\tau_s (\gamma, \lambda)$ to be the minimal positive integer $t$ such that
$$
\gamma^t \equiv \lambda^t \pmod {\fp^s}.
$$ 
It is easy to see that $\tau_1 (\gamma, \lambda)$ is relatively prime to $p$. 

If $\lambda =1$, we write 
$$
\tau_s(\gamma)=\tau_s(\gamma,\lambda).
$$ 

\begin{lemma} 
\label{lem:per growth}
Let $p$ be a rational unramified prime and $p\ZK \subseteq \fp$. 
Let $\gamma,\lambda \in \ZK$ such that  $\gamma  ^ t \neq \lambda^t$ for any $t\neq 0$.
Write 
$$
\tau_{*} (\gamma,\lambda) = 
\begin{cases}
\tau_1(\gamma,\lambda) & \text{if $p\neq 2$},\\
\tau_2(\gamma,\lambda) & \text{if $p= 2$},
\end{cases}
$$ 
and let 
$$
\beta_{\fp} (\gamma, \lambda) = \nu_{\fp}\(\gamma^{\tau_{*} (\gamma,\lambda) } - \lambda^{\tau_{*} (\gamma,\lambda) } \).
$$ 
Then for $s\geq 2$, we have
$$
\tau_s (\gamma, \lambda)
=
\begin{cases}
\tau_{*} (\gamma,\lambda)  & \text{if } s\leq  \beta_{\fp} (\gamma, \lambda), \\
\tau_{*} (\gamma,\lambda) p^{ s-\beta_{\fp} (\gamma, \lambda)}  & \text{if } s\geq \beta_{\fp} (\gamma, \lambda).
\end{cases}
$$
\end{lemma}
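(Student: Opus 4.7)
The lemma is a prime-ideal incarnation of the classical ``lifting-the-exponent'' principle. The plan is to reduce to the case $\lambda = 1$, then control the $\fp$-adic valuation of $\eta^{\tau_* p^k} - 1$ via a binomial expansion and iterate. Since both $\gamma$ and $\lambda$ are units modulo $\fp$, they are units in the completion $\Z_\fp$, so I would set $\eta = \gamma \lambda^{-1} \in \Z_\fp^\times$ and observe that $\tau_s(\gamma,\lambda)$ equals the multiplicative order $\tau_s(\eta)$ of $\eta$ modulo $\fp^s$. The hypothesis $\gamma^t \neq \lambda^t$ for all $t \neq 0$ ensures $\eta$ is not a root of unity, so $\beta = \nu_\fp(\eta^{\tau_*} - 1)$ is a finite positive integer (with $\beta \ge 2$ automatic when $p = 2$ by the choice $\tau_* = \tau_2$).

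For $s \le \beta$ the two divisibilities $\tau_s(\eta) \mid \tau_*$ (from $\eta^{\tau_*} \equiv 1 \pmod{\fp^\beta}$) and $\tau_* \mid \tau_s(\eta)$ (by reducing $\eta^{\tau_s(\eta)} \equiv 1 \pmod{\fp^s}$ down to $\fp$ for odd $p$, or to $\fp^2$ for $p = 2$, permitted since $s \ge 2$) immediately give equality. For $s \ge \beta$, fix a uniformizer $\pi$ of $\fp$ and write $\eta^{\tau_*} = 1 + \pi^\beta w$ with $w \in \Z_\fp^\times$. The core step is an induction on $k \ge 0$ showing
$$
\eta^{\tau_* p^k} \equiv 1 + \pi^{\beta+k} w_k \pmod{\fp^{\beta+k+1}}
$$
for some units $w_k$, based on the binomial expansion
$$
(1 + \pi^a u)^p = 1 + p\,\pi^a u + \binom{p}{2} \pi^{2a} u^2 + \cdots + \pi^{pa} u^p.
$$
Because $p$ is unramified in $\ZK$, $\nu_\fp(p) = 1$, so the leading correction $p\,\pi^a u$ has valuation exactly $a + 1$. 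The induction goes through once all later terms have strictly larger valuation: for odd $p$ any $a \ge 1$ suffices, while for $p = 2$ the term $\pi^{2a} u^2$ has valuation $2a$ and strictly dominates $a+1$ only when $a \ge 2$.

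To extract the exact value of $\tau_s(\eta)$, write $\tau_s(\eta) = \tau_* m$ (using $\tau_* \mid \tau_s(\eta)$ from the reduction argument above), decompose $m = p^j m'$ with $\gcd(m', p) = 1$, and note that raising $1 + \pi^{\beta+j} w_j$ to a power $m'$ coprime to $p$ preserves the $\fp$-adic valuation of the result minus $1$; the requirement $\beta + j \ge s$ together with the minimality of $\tau_s(\eta)$ then forces $m = p^{s - \beta}$, proving the claimed formula. The main obstacle is the $p = 2$ case: the second binomial term $\pi^{2a} u^2$ is no longer divisible by $p$ and competes with the leading term $2\pi^a u$, which is exactly what forces the bifurcated definition $\tau_* = \tau_2$ rather than $\tau_1$ and the need to start the induction at $\beta \ge 2$. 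Everything else is careful bookkeeping of $\fp$-adic valuations, exploiting that $p$ is unramified so that $\nu_\fp$ and $\nu_p$ agree on rational integers.
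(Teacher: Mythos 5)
Your proposal is correct and takes essentially the same approach as the paper: reduce to $\lambda = 1$, expand $(1+z)^p$ binomially, and use that $p$ is unramified (so $\nu_\fp(p)=1$) to see that the $\fp$-adic valuation of $\eta^{\tau_* p^k}-1$ increases by exactly one at each step, with the $p=2$ case handled by the choice $\tau_* = \tau_2$ forcing $\beta\ge 2$. The extra bookkeeping you include (the divisibility argument for $s\le\beta$ and extraction of the exact order from the induction) is material the paper's terser proof leaves implicit, but the core argument is identical.
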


\begin{proof}
It is enough to show that for $s\geq \beta_\fp(\gamma,\lambda)$ we have
$$
\nu_\fp((\gamma/\lambda)^{\tau_s(\gamma,\lambda)}-1)=s, 
$$
and thus we can assume $\lambda =1$ without loss of generality.

If
$$
\gamma^{\tau_s(\gamma)} -1 = \vartheta_s, \quad \nu_{\fp}(\vartheta_s) =s,
$$
then
$$
\gamma^{p\cdot \tau_s(\gamma)}=\left(1+ \vartheta_s \right)^p = 1 + \sum_{i=1}^{p-1}\binom{p}{i} \vartheta_s^{i} + \vartheta_s^{p}.
$$
Here
$$
\nu_\fp \left( \vartheta_s^{p}  \right)=p\cdot s \mand
\nu_\fp \left( \binom{p}{i} \vartheta_s^{i}  \right)=i\cdot s+1 \text{ for }  1\leq i<p.
$$
Since $s>1$ if $p=2$,  we see that 
$$
\nu_\fp \left( \binom{p}{i} \vartheta_s^{i}  \right)>s+1 \quad \text{for $2\leq i\leq p$}
$$
and thus $\nu_{\fp}\left( \gamma^{p\cdot \tau_s(\gamma)}-1 \right)=s+1$. 
\end{proof}

It follows from Lemma~\ref{lem:per growth}, that there are $s_*$,  $\beta_*$ and $\tau_*$ such that 
\begin{equation}\label{eq: tau*}
\tau_s = \ord _{\fp^s}A=  \tau_{*} p^{s- \beta_*}  \quad \text{with } \gcd \(\tau_{*} , p\) =1 \quad \text{for }  s \ge s_*.
\end{equation}  

Let $s\geq  s_*$.  Put
$$
\gamma_{i}^{\tau_s} = 1+\vartheta_i, \qquad i =1, \ldots, d.
$$
We observe that 
$$
\vartheta_i\in \fp^{s}, \quad i =1, \ldots, d, \mand  \vartheta_j\not \in \fp^{s+1 } \quad \text{for some}\ j \in \{1, \ldots, d\}. 
$$

\subsection{Polynomial representation of linear recurrence sequneces}
We see from~\eqref{eq:LRS=PowSum}  that  for any integers $m, n \ge 0$ and $s \ge 1$ we have 
\begin{equation}\label{eq:repr}\begin{split}
 u_{n+\tau_s m}& = \frac{1}{\det A}  \sum_{i=1}^d \alpha_i \gamma_{i}^{n+\tau_s m}=\frac{1}{\det A} \sum_{i=1}^d \alpha_i \gamma_{i}^{n}(1+\vartheta_i)^{m}\\  
 &=\frac{1}{\det A}  \sum_{i=1}^d \alpha_i \gamma_{i}^{n} \sum_{j=0}^{m}\binom{m}{j} \vartheta_i^{js}
    =\frac{1}{\det A} \sum_{j=0}^m   h_{n,j}  p^{js} \binom{m}{j}
 \end{split} 
\end{equation}
with
\begin{equation}\label{eq:h}
h_{n,j}= p^{-js} \sum_{i=1}^d \alpha_i \gamma_i^n \vartheta_i^j\in\Z.
\end{equation} 
Indeed, it is trivial that $\fp^{js}$ divides the inner sum in~\eqref{eq:h}
 and since $p$ is not ramified, so does $p$. Since clearly $h_{n,j} \in \Q$ we
 can now conclude that in fact  $h_{n,j} \in \Z$.

\begin{lemma}
\label{lem:nondiv} 
Let $h_{n,j}$ ($1\leq j \leq m$) be defined by~\eqref{eq:h} with  $s\geq  s_*$.  
Write $\gamma_0=1$ and let 
\begin{equation}\label{eq:w}
w= \frac{d(d+1)}{2}\max_{1 \le i < j\le d}\left\{\beta_\fp(\gamma_i,\gamma_j)-\beta_*, 0 \right\} +1.
\end{equation}
Then for any integer $j\in [0, m-d+1]$, we have  
$$
\(h_{n,j},\ldots,h_{n,j+d-1}\)\not \equiv (0,\ldots, 0) \pmod {p^{w}}.
$$ 
\end{lemma}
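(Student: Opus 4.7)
We argue by contradiction. Suppose that $h_{n,j+k}\equiv 0\pmod{p^w}$ for every $k=0,1,\ldots,d-1$.

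We begin by setting up a Vandermonde matrix equation over $\Z_\fp$. Since $\nu_\fp(\vartheta_i)\ge s$ and $p$ is unramified in $\ZK$, the scaled quantities $\widetilde\vartheta_i:=\vartheta_i/p^s$ all lie in $\Z_\fp$. Dividing~\eqref{eq:h} through by $p^{js}$ yields
\[
h_{n,j+k}=\sum_{i=1}^{d}\bigl(\alpha_i\gamma_i^{\,n}\bigr)\,\widetilde\vartheta_i^{\,j+k},\qquad k=0,1,\ldots,d-1,
\]
which is exactly the matrix equation $V\vec z=\vec h$, where $V$ is the $d\times d$ Vandermonde matrix with $V_{k,i}=\widetilde\vartheta_i^{\,k}$, $\vec z=\bigl(\alpha_i\gamma_i^{\,n}\widetilde\vartheta_i^{\,j}\bigr)_{i=1}^d$ and $\vec h=(h_{n,j+k})_{k=0}^{d-1}$. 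The hypothesis reads $\vec h\equiv\vec 0\pmod{\fp^w}$.

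The key step is to control $\nu_\fp(\det V)$. Running the lifting-the-exponent calculation inside the proof of Lemma~\ref{lem:per growth}, but with $\gamma=\gamma_\ell$ and $\lambda=\gamma_i$ in place of $(\gamma,1)$, shows that
\[
\nu_\fp(\vartheta_\ell-\vartheta_i)=\nu_\fp\bigl((\gamma_\ell/\gamma_i)^{\tau_s}-1\bigr)=s+\bigl(\beta_\fp(\gamma_i,\gamma_\ell)-\beta_*\bigr),
\]
and hence $\nu_\fp(\widetilde\vartheta_\ell-\widetilde\vartheta_i)=\beta_\fp(\gamma_i,\gamma_\ell)-\beta_*\le B$, where $B$ denotes the maximum appearing in~\eqref{eq:w}. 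Since $\det V=\prod_{1\le i<\ell\le d}(\widetilde\vartheta_\ell-\widetilde\vartheta_i)$, this yields the upper bound $\nu_\fp(\det V)\le\binom{d}{2}B$.

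We now apply Lemma~\ref{lemma:determinant}. Let $\mu=\min_i\nu_\fp(z_i)$ and write $\vec z=p^\mu\vec z'$ with $\vec z'\in\Z_\fp^d$ and $\vec z'\not\equiv\vec 0\pmod\fp$. Provided $\mu\le w$, the identity $V\vec z'=p^{-\mu}\vec h$ lies in $\Z_\fp^d$ and is congruent to $\vec 0$ modulo $\fp^{w-\mu}$, so Lemma~\ref{lemma:determinant} forces $\nu_\fp(\det V)\ge w-\mu$. Combined with the upper bound and $w=\binom{d+1}{2}B+1$, this gives $\mu\ge w-\binom{d}{2}B=dB+1$; the alternative $\mu>w$ makes the same conclusion trivial. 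Fix any index $i_0$ with $\beta_\fp(\gamma_{i_0},1)=\beta_*$ (which exists by the definition of $\beta_*$); then $\widetilde\vartheta_{i_0}$ is a unit in $\Z_\fp$, so $\nu_\fp(z_{i_0})=\nu_\fp(\alpha_{i_0})\ge dB+1\ge 1$, contradicting the non-vanishing of $\alpha_{i_0}$ modulo $\fp$ built into the proper/$p$-primitive set-up of the sequence.

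The hardest part will be this final contradiction: extracting from the proper/$p$-primitive hypothesis on $(\vu,\vv)$, via the eigendecomposition~\eqref{eq:LRS=PowSum}, the statement that some coefficient $\alpha_{i_0}$ attached to an index $i_0$ of minimal $\beta_\fp(\gamma_{i_0},1)$ is non-zero modulo $\fp$. Everything else reduces to a clean Vandermonde-determinant valuation computation driven by Lemma~\ref{lem:per growth}.
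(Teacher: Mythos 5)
You take essentially the same route as the paper: set up a Vandermonde-type system for $(h_{n,j},\ldots,h_{n,j+d-1})$ over $\Z_\fp$, invoke Lemma~\ref{lemma:determinant}, and bound the $\fp$-adic valuation of the determinant using the lifting computation from Lemma~\ref{lem:per growth}, which gives $\nu_\fp\bigl((\vartheta_\ell-\vartheta_i)/p^s\bigr)=\beta_\fp(\gamma_i,\gamma_\ell)-\beta_*$. The only structural difference is where the diagonal factors $\vartheta_i^j/p^{js}$ live: you absorb them into $\vec z$, keeping the matrix $V$ a pure Vandermonde, whereas the paper keeps them inside $\varTheta=V\cdot\mathrm{diag}\bigl(\vartheta_1^j/p^{js},\ldots,\vartheta_d^j/p^{js}\bigr)$ and multiplies by $(\alpha_1\gamma_1^n,\ldots,\alpha_d\gamma_d^n)$, a vector to which the hypothesis ``$\not\equiv 0\pmod\fp$'' applies directly via Lemma~\ref{lemma:determinant}. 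The paper then extracts the contradiction straight from the product formula for $\det\varTheta$; you instead normalize $\vec z=p^\mu\vec z'$ and obtain $\mu\ge dB+1$.

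The gap you flag at the end is a genuine one for your formulation. Getting a contradiction from $\mu\ge dB+1$ requires $\nu_\fp(\alpha_{i_0})\ge 1$ for a \emph{specific} index $i_0$ (one with $\beta_\fp(\gamma_{i_0},1)=\beta_*$, so that $\vartheta_{i_0}/p^s$ is a unit), but properness of $(\vu,\vv)$ only guarantees that \emph{some} $\alpha_i$ is a unit, and the unit one need not have the right $i_0$. If $f$ is irreducible modulo $p$ the Frobenius at $\fp$ permutes the $\gamma_i$ transitively and carries $\alpha_i$ to $\alpha_{\sigma(i)}$, so all the $\alpha_i$ are simultaneously units and the gap closes; but Lemma~\ref{lem:nondiv} and Theorem~\ref{thm:main} are stated without irreducibility. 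It is worth observing, though, that the paper's own decomposition does not genuinely escape this: its conclusion from $\det\varTheta\equiv 0\pmod{\fp^w}$ singles out a difference factor with $1\le i_1<i_2\le d$, which implicitly treats the diagonal product $\prod_i(\vartheta_i/p^s)^j$ (whose valuation grows linearly in $j$ whenever some $\vartheta_i/p^s$ is a non-unit, i.e.\ some $\beta_\fp(\gamma_i,1)>\beta_*$) as contributing nothing. So your version makes a tacit hypothesis of the paper's argument explicit rather than introducing a new obstruction; neither writeup, as literally given, handles the case $\max_i\beta_\fp(\gamma_i,1)>\beta_*$ with an unbounded index $j$.
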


\begin{proof}
Suppose, that 
$$
\(h_{n,j},\ldots,h_{n,j+d-1}\)\equiv (0,\ldots, 0) \pmod {\fp^{w}},
$$
which, as we have mentioned, is equivalent to a similar congruence modulo $p^w$. 

Let $\pi$ be a uniformizer of the maximal ideal $\fp$ in $\Z_\fp$. As $\vartheta_i\in \fp^s$ we can write
$$
\widebar{\vartheta}_i = \pi^{-s}\vartheta_i \in \Z_\fp , \quad 1\leq i \leq d.
$$

Let
$$
\varTheta= 
\begin{pmatrix}
\widebar{\vartheta}_1^{j}& \ldots, & \widebar{\vartheta}_d^{j}\\
\vdots & \ddots & \vdots \\
\widebar{\vartheta}_1^{j+d-1} & \ldots, & \widebar{\vartheta}_d^{j+d-1}
\end{pmatrix}. 
$$
Then  
\begin{align*}
 \varTheta  \(\alpha_1\gamma_1^n,\ldots, \alpha_d\gamma_d^n\)  & = \( (p/\pi)^{js} h_{n,j},\ldots,  (p/\pi)^{(j+d-1)s} h_{n,j+d-1}\)\\
 &  \equiv \(0,\ldots, 0\)\pmod {\fp^w}.
\end{align*}

As $\(\alpha_1\gamma_1^k,\ldots, \alpha_d\gamma_d^k\)\not \equiv 0 \pmod \fp$, we have by Lemma~\ref{lemma:determinant} that 
\begin{align*}
\det \varTheta & = \widebar{\vartheta}_1^{j} \ldots \widebar{\vartheta}_d^{j} \prod_{0\leq i_1<i_2 \leq d} \(\widebar{\vartheta}_{i_1} -\widebar{\vartheta}_{i_2}\)\\
& =\widebar{\vartheta}_1^{j} \ldots \widebar{\vartheta}_d^{j}\prod_{0\leq i_1<i_2\leq d} \frac{\gamma_{i_1}^{\tau_s}-\gamma_{i_2}^{\tau_s}}{\pi^s}\equiv 0 \pmod {\fp^{w}},
\end{align*}
where we put $\gamma_0=1$, $\vartheta_0=0$. 

Thus 
$$
\gamma_{i_1}^{\tau_s}-\gamma_{i_2}^{\tau_s} \equiv 0 \pmod { \fp^{s + \rf{\frac{2w}{d(d+1)} } }}
$$
for some $1 \le i_1<i_2 \le d$.

Then, using~\eqref{eq: tau*} we write
$$
\tau_\fp\(\gamma_{i_1},\gamma_{i_2}\) p^{s + \rf{\frac{2w}{d(d+1)} }-\beta\(\gamma_{i_1},\gamma_{i_2}\)} \mid 
\tau_s = \tau_* p^{s-\beta_*} .
$$
As both $\tau$'s is prime to $p$, we see that
$$
\left\lceil \frac{2w}{d(d+1)} \right\rceil
\leq
\beta\(\gamma_{i_1},\gamma_{i_2}\)-\beta_*,
$$
which contradicts the definition of $w$ and thus concludes the proof. 
\end{proof}

\begin{rem} It is important to observe that the parameter $w$ defined by~\eqref{eq:w} in Lemma~\ref{lem:nondiv} 
does not depend on the sequence $(u_n)$ in~\eqref{eq:LRS=PowSum} with 
a proper pair $\(\vu, \vv\)$.
\end{rem}

\begin{lemma}
\label{lem:expansion} 
For any proper pair of vectors $(\vu, \vv) \in \cR_t^d \times  \cR_t^d$ for
the corresponding sequence $(u_n)$ given by~\eqref{eq:LRS=PowSum} the following holds. 
For any integers $m, n \ge 0$ and $t \ge s \ge s_*$ with 
$$
r \le p^s
$$
where
$$
r = \fl{t/s} ,
$$
 we have 
$$
 u_{n+\tau_s m} \equiv \frac{1}{r! \det A}   \sum_{j=0}^r   H_{n,j} p^{sj} m^j \pmod  {p^t}
 $$
with $H_{n,j}\in \Z$, $j =0, \ldots, r$ such that  for $j\in [0, r-d+1]$
$$
\min  \left\{\nu_p\(H_{n,j}\), \ldots,  \nu_p\(H_{n,j+d-1}\)\right\} < w+\nu_p(r!),
$$
where $w$ is defined by \eqref{eq:w}.  
\end{lemma}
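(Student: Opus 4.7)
The plan has two stages: construct the polynomial expansion via Stirling-number manipulation, and then verify the non-divisibility by a determinant argument parallel to Lemma~\ref{lem:nondiv}.

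\emph{Expansion.} Starting from~\eqref{eq:repr}, I first truncate the sum at $j = r$: for $j \ge r+1$ we have $js \ge (r+1)s > t$, so $h_{n,j}\, p^{js}\binom{m}{j} \equiv 0 \pmod{p^t}$ and these terms can be discarded. Multiplying the truncated identity by $r!$ and using $r!\binom{m}{j} = \frac{r!}{j!}m(m-1)\cdots(m-j+1) \in \Z$ for $j \le r$, the identity becomes an integer congruence modulo $p^{t+\nu_p(r!)}$. Then expanding the falling factorial via signed Stirling numbers of the first kind, $m(m-1)\cdots(m-j+1) = \sum_k s(j,k)\, m^k$, and collecting by powers of~$m$ defines
$$H_{n,k} = \sum_{j=k}^r \frac{r!}{j!}\, s(j,k)\, h_{n,j}\, p^{(j-k)s} \in \Z.$$
Dividing through by $r!\det A$ in $\Z_{(p)}$ (using $\gcd(\det A, p) = 1$, and noting that this reduces the modulus $p^{t+\nu_p(r!)}$ to $p^t$) yields the stated congruence.

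\emph{Non-divisibility.} I argue by contradiction, assuming $\nu_p(H_{n,j+i}) \ge w + \nu_p(r!)$ for all $i \in [0, d-1]$. Substituting~\eqref{eq:h} gives
$$H_{n,k}\, p^{ks} = \sum_{l=1}^d \alpha_l \gamma_l^n\, \phi_l^{(k)}, \qquad \phi_l^{(k)} := \sum_{J=k}^r \frac{r!}{J!}\, s(J,k)\, \vartheta_l^J.$$
Form the $d \times d$ matrix $\Phi_{i,l} = \phi_l^{(j+i)}$. Since $(\alpha_l\gamma_l^n)_l \not\equiv 0 \pmod \fp$ for a proper pair (as verified inside the proof of Lemma~\ref{lem:nondiv}), applying Lemma~\ref{lemma:determinant} to the relation $\Phi\cdot(\alpha_l\gamma_l^n)_l^T = (H_{n,j+i}\,p^{(j+i)s})_i^T$ yields $\nu_\fp(\det\Phi) \ge w+\nu_p(r!)+js$. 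For the contradiction, I compute $\det\Phi$ by multilinear expansion: each entry satisfies $\phi_l^{(j+i)} = \frac{r!}{(j+i)!}\,\vartheta_l^{j+i} + (\text{higher-order in } \vartheta_l)$, and for $s \ge s_*$ together with $r \le p^s$ each subsequent term has $\fp$-valuation at least $s$ above the leading one. The dominant contribution to $\det\Phi$ is then the Stirling-free Vandermonde-type product
$$\prod_{i=0}^{d-1} \frac{r!}{(j+i)!}\cdot \vartheta_1^j\cdots\vartheta_d^j \prod_{1 \le l_1<l_2 \le d}(\vartheta_{l_2}-\vartheta_{l_1}),$$
which is then analyzed via Lemma~\ref{lem:per growth} exactly as in the proof of Lemma~\ref{lem:nondiv}. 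The pigeonhole argument over the $\frac{d(d+1)}{2}$ factors together with the choice~\eqref{eq:w} of $w$ delivers the contradiction; the extra $\nu_p(r!)$ on the right-hand side accounts for the factorial prefactor $\nu_p\bigl(\prod_i r!/(j+i)!\bigr)$ after the telescoping $\sum_i \nu_p((j+i)!)$, which is sharp enough under $r \le p^s$.

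The main obstacle is this final bookkeeping step: showing that $\nu_p\bigl(\prod_i r!/(j+i)!\bigr)$ fits inside the single additive $\nu_p(r!)$ in the claim. Once this is verified, the Vandermonde/pigeonhole argument mirrors that of Lemma~\ref{lem:nondiv} with $w$ replaced by $w + \nu_p(r!)$ throughout.
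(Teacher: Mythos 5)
Your expansion step reproduces the paper's exactly: the signed Stirling numbers $s(j,k)$ are precisely the integer coefficients $c_{j,k}$ the paper extracts from the falling factorial (with $s(j,j)=c_{j,j}=1$), so your formula
$$H_{n,k}=\sum_{J=k}^{r}\frac{r!}{J!}\,s(J,k)\,h_{n,J}\,p^{(J-k)s}$$
coincides with theirs term by term. Where you diverge, and where the argument remains genuinely incomplete, is the non-divisibility step.

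Once $H_{n,k}$ is written in this form, the paper needs no new determinant at all. Fix $j$, choose $k\in[j,j+d-1]$ with $\nu_p(h_{n,k})$ minimal, and invoke Lemma~\ref{lem:nondiv} directly to get $\nu_p(h_{n,k})<w$. The $J=k$ summand of $H_{n,k}$ is $h_{n,k}\cdot\frac{r!}{k!}$ (since $s(k,k)=1$), and the hypothesis $r\le p^{s}$ is exactly what is needed to show that every summand with $J>k$ carries a strictly larger $p$-adic valuation, the extra $p^{s(J-k)}$ dominating $\nu_p(J!/k!)$. Hence $\nu_p(H_{n,k})=\nu_p(h_{n,k})+\nu_p(r!/k!)<w+\nu_p(r!)$, and the lemma is proved by tracking a single leading coefficient. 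Lemma~\ref{lem:nondiv} already did all the Vandermonde and pigeonhole work, and it is used here as a black box.

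You instead introduce a second matrix $\Phi$ with Stirling-weighted entries $\phi_l^{(k)}=\sum_{J\ge k}\frac{r!}{J!}\,s(J,k)\,\vartheta_l^{J}$ and attempt to redo the whole determinant/Vandermonde/pigeonhole analysis of Lemma~\ref{lem:nondiv} on $\Phi$ in place of the simpler matrix $\varTheta$ of pure powers. This is a legitimate strategy in principle, but as written it has two concrete gaps. First, the claim that in the multilinear expansion of $\det\Phi$ ``each subsequent term has $\fp$-valuation at least $s$ above the leading one'' is not established; it can fail at the boundary (for instance when $J=k+1$ equals $p^{s}$, so $\nu_p(J!/k!)=s$ exactly), and in any case controlling cancellation among the many cross-terms of a $d\times d$ multilinear expansion is real work, not a remark. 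Second, the factorial bookkeeping you flag as ``the main obstacle'' is indeed an obstacle: $\nu_p\bigl(\prod_{i=0}^{d-1}r!/(j+i)!\bigr)$ can strictly exceed $\nu_p(r!)$ (already $p=2$, $d=2$, $j=0$, $r=4$ gives $6>3$), so it cannot simply be absorbed into the single additive $\nu_p(r!)$ in the statement; the whole comparison with $w+\nu_p(r!)$ would have to be reorganized. The paper's leading-term observation sidesteps both issues by isolating one coordinate $k$ rather than taking a determinant of the transformed system.
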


\begin{proof}
Let $s\geq s_*$ defined by \eqref{eq: tau*}.

Write the binomial coefficient $\binom{m}{i}$ as a polynomial in $m$,
$$
\binom{m}{i}= \frac{c_{i,i}m^i + c_{i,i-1}m^{i-1}+\ldots +c_{i,0}}{i!} \quad \text{with } c_{i,i}, \ldots, c_{i,0}\in \Z,  
$$
and note that $c_{i,i} = 1$.
Then by~\eqref{eq:repr}, we have
\begin{align*}
 u_{n+\tau_s m} \equiv& \frac{1}{\det A}   \sum_{i=0}^r   h_{n,i} p^{si} \binom{m}{i} \\
\equiv& \frac{1}{r!\det A}   \sum_{i=0}^r  \frac{r!}{i!} h_{n,i} p^{si} \sum_{j=0}^{i}c_{i,j}m^j \\
\equiv& \frac{1}{r!\det A}   \sum_{j=0}^r   \left(\sum_{i=j}^{r}h_{n,i}\frac{r!}{i!} c_{i,j}p^{s(i-j)}\right)  p^{sj} m^j \pmod{p^t}
\end{align*}
(where the last two congruences are actually equalities).

Put
$$
H_{n,j}=\sum_{i=j}^{r}h_{n,i}\frac{r!}{i!} c_{i,j}p^{s(i-j)}.
$$

Let us fix $j\in[0,r-d+1]$ and let $k\in [j,j+d-1]$ be such that
$$
\nu_p(h_{n,k})=\min  \left\{\nu_p\(h_{n,j}\), \ldots,  \nu_p\(h_{n,j+d-1}\)\right\}.
$$
 By Lemma~\ref{lem:nondiv}, we have $\nu_p(h_{n,k})< w$. For such index, as $c_{k,k}=1$, and $r\leq p^s$,  we have
 $$
\nu_p\left(\frac{r!}{k!}c_{k,k}  \right) = \nu_p\(\frac{r!}{k!}\)< \nu_p\left(\frac{r!}{i!}c_{i,k}p^{s(i-k)}\right),
$$
for $i=k, \ldots, r$, whence 
$$
\nu_p(H_{n,k})=\nu_p( h_{n,k}) + \nu_p(r!/k!)< w+  \nu_p(r!)
$$
and the result follows.  
\end{proof}

\section{Background on exponential sums} 

\subsection{Explicit form of the Vinogradov mean value theorem}

Let $N_{k,r}(M)$ be the number of integral solutions of the system of equations
\begin{align*}
 x_1^j+\ldots+x_k^j&=y_1^j+\ldots+y_k^j, \qquad j =1, \ldots, r,\\
 1\leq x_i&,y_i\leq M,  \qquad i =1, \ldots, k.
\end{align*}

Our application of Lemma~\ref{lem:Kor} below rests on a version of the 
Vinogradov mean value theorem which  gives a precise bound on $N_{k,r}(M)$.
We use its fully explicit 
version given by Ford~\cite[Theorem~3]{Ford}, which we present 
here in the following weakened and simplified form, adjusted to our needs. 

\begin{lemma}
\label{lem:ford} There is  absolute constant $c_0>0$  such that for 
$r \ge c_0 d$ with $d \ge 2$ and $k = \fl{6 r^2 \log d}$ we have 
for every  any integer $M\ge 1$ that
$$
N_{k,r}(M)\le  r^{3r^3}M^{2k-r(r+1)/2 + \delta_{r} r^2},
$$
for some 
$$
\delta_r \le \frac{1}{1000 d}  . 
$$ 
\end{lemma}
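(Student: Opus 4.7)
The plan is to deduce the stated inequality by specializing Ford's fully explicit Vinogradov mean value theorem \cite[Theorem~3]{Ford} to the parameter choice $k=\fl{6r^2\log d}$ and then simplifying. Ford's theorem delivers, for every pair $(k,r)$ in the regime we care about, a bound of the shape
$$
N_{k,r}(M) \le D(k,r)\, M^{2k - r(r+1)/2 + \eta(k,r)},
$$
where $\eta(k,r)$ is an explicit function that decays like $r^2 e^{-\kappa k/r^2}$ for some absolute constant $\kappa>0$, and $D(k,r)$ is an explicit prefactor of the form $r^{O(r^2)}(\text{const})^k$. The lemma is essentially a cosmetic repackaging of that theorem, so the task is just to verify the numerology under our specific parameter choice.

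First I would check the decay of the exponent. With $k=\fl{6r^2\log d}$ one has $k/r^2 \ge 6\log d - 1$, so
$$
\eta(k,r) \le r^2 e^{-\kappa(6\log d-1)} = r^2\, e^{\kappa}\, d^{-6\kappa}.
$$
For $d\ge 2$ this is bounded by $r^2/(1000 d)$ provided $d^{6\kappa-1}\ge 1000 e^{\kappa}$; if $\kappa$ in Ford's theorem is not large enough to enforce this directly, one replaces $k$ by a larger constant multiple (which only weakens the prefactor) or, equivalently, absorbs the deficit into $c_0$ by requiring $r\ge c_0 d$ with $c_0$ correspondingly large. This gives a $\delta_r\le 1/(1000d)$ as required.

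Second I would bound the prefactor by $r^{3r^3}$. For any $d\ge 2$ and $r\ge c_0 d$, we have $k=\fl{6r^2\log d}\le 6r^2\log r\le r^3$ once $c_0$ is large enough relative to the implicit absolute constants, so Ford's factor $r^{O(r^2)}(\text{const})^k$ is dominated by $r^{O(r^3)}$; taking $c_0$ larger if necessary, this is bounded by $r^{3r^3}$. The constraint $r\ge c_0 d$ is there precisely to make both of these simplifications uniform in $d\ge 2$.

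The only real work is the bookkeeping of matching Ford's explicit constants against our weakened form; there is no genuine obstacle, since the substantive estimate is entirely supplied by \cite[Theorem~3]{Ford} and every loss we incur is harmlessly absorbed either into $c_0$ (on the side of $r$) or into the cosmetic $r^{3r^3}$ prefactor.
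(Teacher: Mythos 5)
The proposal follows the same route as the paper (specialize Ford's Theorem~3 to $k=\fl{6r^2\log d}$, then bound the coefficient and $\delta_r$), but there is a concrete arithmetic slip in the numerology that makes the argument as written fail precisely at $d=2$. You write ``$k/r^2 \ge 6\log d - 1$,'' but the floor only costs $1$ in $k$, not $r^2$: from $k=\fl{6r^2\log d}\ge 6r^2\log d - 1$ one gets $k/r^2 \ge 6\log d - 1/r^2$, not $6\log d - 1$. The wasted factor $e^{\kappa(1-1/r^2)}$ is not harmless: with Ford's actual constants (effectively $\kappa=2$, together with the prefactor $\tfrac38 e^{1/2+1.7/r}<1$), your version requires $d^{11}\ge 1000\,e^2\approx 7400$, which fails at $d=2$ where $d^{11}=2048$. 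The workarounds you offer do not rescue it: $k=\fl{6r^2\log d}$ is fixed by the lemma statement and cannot be replaced by a larger multiple, and enlarging $c_0$ only increases $r$, which has already dropped out of the final bound on $\delta_r$. The correct fix is simply to keep the term as $-1/r^2$; then the loss is $e^{2/r^2}$, which is absorbed together with $1/2+1.7/r$ into a quantity $\le 2/3$ for $r\ge 1000$, and one is left with $\tfrac38 e^{2/3}\,d^{-12}\le 1/(1000d)$ for $d\ge 2$ — this is exactly what the paper does (after first ensuring $c_0$ is large enough that $r\ge 1000$, so that Ford's theorem applies and these error terms are small). The remaining part of your proposal, bounding Ford's coefficient by $r^{3r^3}$ via a large choice of $c_0$, is in the spirit of the paper's proof, although your stated form $r^{O(r^2)}(\mathrm{const})^k$ for Ford's coefficient is not quite what the paper quotes (it identifies the dominant term as $r^{2.055r^3}$ directly); this does not affect the conclusion.
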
 

\begin{proof} First we make sure that $c_0$ is large enough so $r \ge 1000$ and so~\cite[Theorem~3]{Ford}
applies.  Next, examining the coefficient (depending only on $k$ and $r$ in our notation)
which appears in the bound of~\cite[Theorem~3]{Ford} we see that 
it is a product of several terms, the largest is $r^{2.055 r^3}$. Hence, for a sufficiently large $c_0$, 
we can bound this coefficient as $r^{3r^3}$.  Now it only remains to estimate $\delta_r$. 

By~\cite[Theorem~3]{Ford}, the desired result holds with 
\begin{align*}
\delta_r &\leq \frac{3}{8} \exp(1/2-2k/r^2+1.7/r) \\& \leq  \frac{3}{8} \exp(1/2-2(6 r^2 \log d-1)/r^2+1.7/r)\\
&\le   \frac{3}{8} \exp(2/3- 12 \log d)
\end{align*}
since $r \ge 1000$. It remains to notice that 
$$
  \frac{3}{8} \exp(2/3) \le 1 \mand   \exp(- 12 \log d) = d^{-12} \le \frac{1}{1000 d}
  $$
for $d \ge 2$. 
\end{proof}

We note that the recent striking advances  in the Vinogradov mean value theorem
due to 
Bourgain, Demeter and Guth~\cite{BDG} and Wooley~\cite{Wool}  are not suitable
for our purposes here as they contain implicit constants
that depend on $k$ and $n$, while in our approach   $k$ and $n$
grow together  with $M$.

\subsection{Double exponential sums with polynomials}

Our main tool to bound the exponential sum $S_{p^t}(N; \vu, \vv)$ is the following 
result of Korobov~\cite[Lemma~3]{Kor}.

\begin{lemma}\label{lem:Kor}
Assume that
$$
\left| \alpha_\ell -\frac{a_\ell}{q_\ell} \right| \le \frac{1}{q_\ell^2} \mand \gcd(a_\ell,q_\ell)=1, 
$$
for some real $\alpha_\ell$ and  integers $a_\ell,q_\ell$, $\ell=1,\ldots, r$. Then
for the sum
$$
S=\sum_{x,y=1}^M \e\(\alpha_1xy+\ldots+\alpha_n x^ry^r \)
$$
we have 
\begin{align*}
|S|^{2k^2}\leq \(64k^2\log(3Q)\)^{r/2} &
M^{4k^2-2k}N_{k,r}(M)\\
& \prod_{\ell=1}^r \min\left\{M^\ell,\sqrt{q_\ell}+\frac{M^\ell}{\sqrt{q_\ell}} \right\},
\end{align*}
where 
$$
Q=\max\{q_\ell~:~1\le \ell \le r\}.
$$
\end{lemma}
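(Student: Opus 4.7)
My approach is to separate the variables $x$ and $y$ by iterated H\"older inequalities, so that one side produces the Vinogradov integral $N_{k,r}(M)$ as a count of coincidences of power sums of auxiliary tuples, while the other side leaves a univariate Weyl sum in $x$ controlled by the rational approximations $a_\ell/q_\ell$.

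Writing $T(x) = \sum_{y=1}^M \e\(\alpha_1 xy + \alpha_2 x^2 y^2 + \cdots + \alpha_r x^r y^r\)$ so that $S = \sum_{x=1}^M T(x)$, the first step is to apply H\"older in $x$ with exponent $2k$, expand $|T(x)|^{2k}$ as a sum over $(\vec y, \vec z) \in [1,M]^{2k}$, and swap the order of summation to obtain
$$
|S|^{2k} \le M^{2k-1} \sum_{\vec y, \vec z} \left|\sum_{x=1}^M \e\!\(\sum_{\ell=1}^r \alpha_\ell x^\ell s_\ell\)\right|,
$$
where $s_\ell = s_\ell(\vec y, \vec z) = \sum_i y_i^\ell - \sum_j z_j^\ell$. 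I would then sort the outer sum by $\vec s = (s_1,\ldots,s_r)$; the multiplicity $J(\vec s)$ of each tuple $\vec s$ is, by Parseval applied to $|\sum_y \e(\sum_\ell \xi_\ell y^\ell)|^{2k}$, maximised at $\vec s = \vec 0$, where it equals $N_{k,r}(M)$.

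A further H\"older step, raising the inequality to a $k$-th power and using that the outer sum has $M^{2k}$ terms, produces the desired $|S|^{2k^2}$ on the left together with the factors $M^{4k^2-2k}$ and $N_{k,r}(M)$ on the right. The residual sum takes the form $\sum_{\vec s}|V(\vec s)|^e$, where $V(\vec s) = \sum_{x=1}^M \e\(\sum_\ell \alpha_\ell s_\ell x^\ell\)$ and the exponent $e$ is determined by the bookkeeping. Expanding this via orthogonality reduces the task to counting, for each $\ell$ independently, integers $s_\ell$ with $|s_\ell| \le kM^\ell$ such that $\alpha_\ell s_\ell$ lies within a small interval modulo $1$. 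The classical counting estimate, given $|\alpha_\ell - a_\ell/q_\ell| \le 1/q_\ell^2$ with $\gcd(a_\ell, q_\ell)=1$, bounds this count by
$$
\min\{M^\ell, \sqrt{q_\ell} + M^\ell/\sqrt{q_\ell}\} \cdot O(\log(3q_\ell)).
$$
Taking the product over $\ell = 1,\ldots,r$ yields the product of minima in the statement, while the logarithmic losses combine into $(\log 3Q)^{r/2}$ and the numerical constants are absorbed into $(64k^2)^{r/2}$.

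The principal obstacle is the bookkeeping of exponents through the iterated H\"older/Cauchy--Schwarz steps, so as to land exactly on $|S|^{2k^2}$ with a single factor of $N_{k,r}(M)$ and precisely $M^{4k^2-2k}$. The cleanest route is to use the pointwise bound $J(\vec s) \le N_{k,r}(M)$ rather than the full second-moment identity $\sum_{\vec s} J(\vec s)^2 = N_{2k,r}(M)$: the former produces the correct appearance of $N_{k,r}$ in the conclusion. A secondary technical challenge is ensuring that the counting lemma, applied $r$ times, yields the logarithmic factor $(\log 3Q)^{r/2}$ rather than the weaker $(\log M)^{r/2}$.
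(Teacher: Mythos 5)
This lemma is cited verbatim from Korobov~\cite[Lemma~3]{Kor}; the paper supplies no proof, so there is no internal argument to match against. What follows assesses your sketch on its own terms, against the structure of Korobov's method.

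Your high-level plan --- H\"older in $x$ to expose $|T(x)|^{2k}$, expand to get weights $J(\vec s)$, bound $J(\vec s)\le J(\vec 0)=N_{k,r}(M)$ via the Fourier/Parseval argument, and finish with the rational-approximation counting lemma --- is indeed the right circle of ideas, and the Parseval bound $J(\vec s)\le N_{k,r}(M)$ is correctly stated and correctly justified. However there is a concrete arithmetic gap in the exponent bookkeeping that you defer to. The two H\"older steps you describe give $|S|^{2k^2}\le M^{(2k-1)k}\cdot (M^{2k})^{k-1}\sum_{\vec y,\vec z}|V(\vec s)|^k = M^{4k^2-3k}\sum_{\vec s}J(\vec s)|V(\vec s)|^k$, i.e.\ $M^{4k^2-3k}$, which falls short of the stated $M^{4k^2-2k}$ by a factor $M^k$. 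Your sketch also suggests a single invocation of $J\le N_{k,r}$, but then the residual $\sum_{\vec s}|V(\vec s)|^k$ has no useful structure. The missing step is a Cauchy--Schwarz on $\sum_{\vec s}J(\vec s)|V(\vec s)|^k\le (\sum_{\vec s}J(\vec s))^{1/2}(\sum_{\vec s}J(\vec s)|V(\vec s)|^{2k})^{1/2}$, which supplies the missing $M^k=(\sum J)^{1/2}$ and simultaneously raises $|V|$ to an even power so it can be expanded. That expansion introduces a \emph{second} set of auxiliary variables $(\vec x,\vec x')$ with associated weights $J(\vec t)$; a second application of the pointwise bound $J(\vec t)\le N_{k,r}(M)$, combined with the outer square root, is precisely why $N_{k,r}(M)$ appears to the first power in the final estimate. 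Relatedly, the final counting is not a threshold count of ``$s_\ell$ with $\alpha_\ell s_\ell$ near $0$'' as you describe, but rather factors of the form $\sum_{|t_\ell|<kM^\ell}\min\bigl(kM^\ell, \|\alpha_\ell t_\ell\|^{-1}\bigr)$ coming from geometric sums over $s_\ell$; it is the square root of these (coming from the outer Cauchy--Schwarz) that produces $\min\bigl(M^\ell, \sqrt{q_\ell}+M^\ell/\sqrt{q_\ell}\bigr)$ and the exponent $r/2$ on the logarithmic factor.

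In short: the skeleton is right, but the proposal as written would land on $M^{4k^2-3k}$ and an ill-structured residual sum; the extra Cauchy--Schwarz, the resulting second appearance of $N_{k,r}(M)$, and the geometric-sum/weighted-count reformulation are not optional bookkeeping details but the steps that make the exponents and the $N_{k,r}$ power in the statement come out correctly.
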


We also need the following simple result which allows us to reduce single sums 
to double sums. 

\begin{lemma}\label{lemma:sum}
 Let $f:\mathbb{R}\rightarrow\mathbb{R}$ be an arbitrary function. Then for any  integers $M,N\ge 1$ and $a\ge 0$, we have
 $$
 \left|\sum_{x=0}^{N-1}\e(f(x)) \right|\leq \frac{1}{M^2}\sum_{x=0}^{N-1}\left|\sum_{y,z=1}^{M}\e(f(x+ayz))\right|+2 a M^2.
 $$
\end{lemma}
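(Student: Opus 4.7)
The plan is to use the standard shift averaging trick: for any fixed integer $s$, a translation of the summation variable gives
$$
\sum_{x=0}^{N-1}\e(f(x)) \;=\; \sum_{x=-s}^{N-1-s}\e(f(x+s)),
$$
so averaging this identity over the $M^2$ shifts $s = ayz$ with $(y,z) \in \{1,\ldots,M\}^2$ produces
$$
M^2 \sum_{x=0}^{N-1}\e(f(x)) \;=\; \sum_{y,z=1}^{M}\sum_{x=-ayz}^{N-1-ayz}\e(f(x+ayz)).
$$

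The next step is to replace each inner range $[-ayz,\, N-1-ayz]$ by the common range $[0, N-1]$. The symmetric difference of these two intervals of integers contains at most $2ayz \le 2aM^2$ points, and since every term has modulus $1$, the error introduced for each pair $(y,z)$ is bounded by $2aM^2$. Summing over the $M^2$ pairs, the total error of this replacement is at most $2aM^4$. After dividing through by $M^2$, this contributes the $2aM^2$ term in the claimed inequality.

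Finally, I would swap the order of summation so that $x$ is the outer index, and apply the triangle inequality to the outer sum over $x$, keeping the inner double sum over $(y,z)$ inside the absolute value. This yields
$$
\left|\sum_{x=0}^{N-1}\e(f(x))\right| \;\le\; \frac{1}{M^2}\sum_{x=0}^{N-1}\left|\sum_{y,z=1}^{M}\e(f(x+ayz))\right| + 2aM^2,
$$
which is the desired bound. The argument is entirely elementary; there is no real obstacle, only careful bookkeeping of the boundary error from the shifted ranges, so the main thing to watch is that the accumulated boundary mismatch stays at $2aM^4$ before the division by $M^2$.
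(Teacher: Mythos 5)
Your proof is correct and takes essentially the same route as the paper: both compare the sum over the original range to the sum over the shifted range, bound the boundary mismatch by $2ayz$ per pair $(y,z)$, accumulate the error to $2aM^4$, divide by $M^2$, and finish with a swap of summation order and the triangle inequality. The only cosmetic difference is that the paper sums the exact boundary error $2a\sum_{y,z}yz$ while you bound each term by $2aM^2$ before summing; both land on $2aM^4$.
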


\begin{proof} Examining the non-overlapping parts of the sums below, we see that for any positive integers $y$ and $z$
$$
 \left|\sum_{x=0}^{N-1}\e(f(x)) - \sum_{x=0}^{N-1} \e(f(x+ayz))\right|  \le 2ayz.
$$
Hence 
$$
 \left|M^2 \sum_{x=0}^{N-1}\e(f(x)) - \sum_{y,z=1}^{M} \sum_{x=0}^{N-1} \e(f(x+ayz))\right|  \le 2a\sum_{y,z=1}^{M} yz \le 2 a M^4.
$$
Changing the order of summation and using the triangle inequality, 
 the result  follows. 
\end{proof}

\subsection{Discrepancy and the Koksma-Sz\"usz inequality}
\label{sec:discrepancy} 
One of the basic tools used to study uniformity of
distribution is the celebrated
\emph{Koksma--Sz\"usz inequality\/}~\cite{Kok,Sz} 
(see also Drmota and Tichy~\cite[Theorem~1.21]{DrTi}),
which links the discrepancy of a sequence of points to certain
exponential sums.

In the notations of  Section~\ref{sec:results}, 
the Koksma--Sz\"usz inequality can be stated as follows.

\begin{lemma}\label{lem:K-S}
For any integer $V \ge 1$, we have
$$
 D_{\fu}(N) \ll \frac{1}{V}
+\frac1N\sum_{0<\|\vv\|\le V}\frac{1}{r(\vv)}
\left|S_{p^t}(N; \vu,\vv)\right|,
$$
where 
$$
\|\vv\|=  \max_{j=1, \ldots, d}|v_j|, \qquad
r(\vv) =  \prod_{j=1}^d \max\(|v_j|,1\),
$$
and the sum is taken over all vectors
$\vv= (v_1,\ldots,v_d)\in\Z^d$ with $0<\|\vv\|\le V$.
\end{lemma}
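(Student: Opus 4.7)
The plan is to follow the classical Erd\H{o}s--Tur\'an--Koksma approach: sandwich the indicator of the box $\cB$ between one-sided trigonometric polynomials of degree at most $V$ in each variable and then apply Fourier expansion.

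First I would construct, for each $j=1,\ldots,d$, the Selberg--Beurling majorant and minorant $F_j^{+}$ and $F_j^{-}$ of the one-dimensional indicator $\chi_{[\alpha_j,\beta_j]}$. These are $1$-periodic trigonometric polynomials of degree $\leq V$ satisfying $F_j^{-}\leq \chi_{[\alpha_j,\beta_j]}\leq F_j^{+}$, with
$$
\int_0^1 F_j^{\pm}(x)\,dx = (\beta_j-\alpha_j)\pm O(1/V),
\qquad
|\widehat{F_j^{\pm}}(v)|\leq \min\!\Bigl(\beta_j-\alpha_j+\tfrac{1}{V},\tfrac{1}{|v|}\Bigr)
$$
for $v\ne 0$. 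Taking the tensor products $F^{\pm}(\vec x)=\prod_{j=1}^{d}F_j^{\pm}(x_j)$ gives trigonometric polynomials on $[0,1)^d$ with $F^{-}\leq \chi_{\cB}\leq F^{+}$, whose Fourier coefficients factor and hence satisfy $|\widehat{F^{\pm}}(\vv)|\ll 1/r(\vv)$ for $\vv\ne 0$, while the zero-th coefficient equals $\vol\cB+O(1/V)$.

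Next I would evaluate $F^{\pm}$ at the points $p^{-t}\vu_n$ and sum over $1\le n\le N$, yielding
$$
\sum_{n=1}^{N} F^{-}(p^{-t}\vu_n)\;\le\; A_{\fu}(\cB,N)\;\le\;\sum_{n=1}^{N} F^{+}(p^{-t}\vu_n).
$$
Expanding $F^{\pm}$ in Fourier series and interchanging summation, the constant term contributes $\vol\cB\cdot N+O(N/V)$, and each remaining frequency $\vv$ with $0<\|\vv\|\le V$ contributes $\widehat{F^{\pm}}(\vv)$ times the inner exponential sum $\sum_{n=1}^{N}\e(p^{-t}\vv A^n\vu)$, which is exactly $S_{p^t}(N;\vu,\vv)$ up to an $O(1)$ shift of index that can be absorbed. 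Using $|\widehat{F^{\pm}}(\vv)|\ll 1/r(\vv)$ and the triangle inequality yields
$$
\bigl|A_{\fu}(\cB,N)-\vol\cB\cdot N\bigr|
\ll \frac{N}{V}+\sum_{0<\|\vv\|\le V}\frac{1}{r(\vv)}\bigl|S_{p^t}(N;\vu,\vv)\bigr|.
$$
Dividing by $N$ and taking the supremum over boxes $\cB\subseteq [0,1)^d$ gives the desired bound on $D_{\fu}(N)$.

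The main obstacle is the construction of the one-dimensional Selberg--Beurling polynomials with the precise Fourier coefficient decay $1/|v|$; once this is available, the passage to the multidimensional setting is entirely routine through the tensor-product structure, and the contribution of the endpoint/index shift in identifying the inner sum with $S_{p^t}(N;\vu,\vv)$ is absorbed into the main error. Since these one-dimensional extremal polynomials are classical (and the full argument is carried out in~\cite[Theorem~1.21]{DrTi}), the result follows.
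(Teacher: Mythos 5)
The paper does not prove this lemma at all: it simply states it as the classical Koksma--Sz\"usz (Erd\H{o}s--Tur\'an--Koksma) inequality and cites Koksma, Sz\"usz, and Drmota--Tichy~\cite[Theorem~1.21]{DrTi}. Your sketch is precisely the standard proof of that result — sandwiching the indicator of the box between Selberg--Beurling majorants/minorants of degree $\le V$ in each coordinate, taking tensor products, expanding in Fourier series, and reading off the $1/r(\vv)$ decay from the one-dimensional bound $|\widehat{F_j^{\pm}}(v)|\ll 1/\max(|v|,1)$ — and it is correct in its essentials; this is also how~\cite{DrTi} proceeds. One small technical point that you gloss over is the index mismatch between $D_{\fu}(N)$ (defined via $n=1,\ldots,N$) and $S_{p^t}(N;\vu,\vv)$ (defined via $n=0,\ldots,N-1$). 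Shifting the range of $n$ changes the inner exponential sum by at most $2$ for each fixed $\vv$, so the total extra contribution after dividing by $N$ is $\ll (\log V)^d/N$, which is absorbed into the $O(1/V)$ term only under the mild hypothesis $V(\log V)^d\ll N$; alternatively one notes that the discrepancies of the two index ranges differ by $O(1/N)$ and applies the inequality to the range $n=0,\ldots,N-1$ directly. Either way the discrepancy introduced is harmless for the paper's application (where $V=\lfloor N^{1/4}\rfloor$), but it deserves a sentence rather than being silently absorbed.
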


\section{Proofs of the main results} 

\subsection{Preliminary assumption}
We recall that in the case of $d=1$
results of the similar strength as of 
Theorems~\ref{thm:main} and~\ref{thm:discrepancy}  are given by Korobov~\cite{Kor}. Thus we can assume, that $d\geq 2$.  

\subsection{Proof of Theorem~\ref{thm:main}}
\label{sec:proof exp}

We can assume, that
$$
N\geq p^{8 t^{1/2}}
$$
since otherwise the result is trivial, see \eqref{eq:MS range}.

Put
$$
s = \left\lfloor \frac{\log N }{4  \log p}\right\rfloor,   \quad r= \left\lfloor \frac{t}{s}\right\rfloor \quad \text{and} \quad k=\left\lfloor 6r^2 \log d\right\rfloor.
$$
Then we have
\begin{equation}
\label{eq:assumptions}
 r<s, \qquad  \max\{w, s^*\} \leq s,\qquad p^{s}\leq N^{1/4}
\end{equation}
if $N$ is large enough, where $s^*$ and $w$ are defined by~\eqref{eq: tau*} and~\eqref{eq:w}, respectively.

First suppose, that $r\geq c_0 d$, where $c_0$ is defined in Lemma~\ref{lem:ford}. Increasing $c_0$ if necessary, we can assume
\begin{equation}
\label{eq:large r} 
r\geq 10 d.
\end{equation} 

Write 
$$
u_n=\vv A^n \vu.
$$
Then by Lemmas~\ref{lem:expansion} and \ref{lemma:sum} we have that
\begin{align*} 
|S_{p^t}(N;\vu,\vv)|&\leq \frac{1}{p^{2s}}\sum_{n=0}^{N-1}    \left| \sum_{x,y=1}^{p^s} \e\left(\frac{u_{n+\tau_s x y}}{p^t} \right)\right| +2\tau_s p^{2s}\\
&\leq \frac{1}{p^{2s}}\sum_{n=0}^{N-1}    \left| \sum_{x,y=1}^{p^s} 
\e\left(
\frac{1}{p^t \, r! \det A}   \sum_{j=0}^r   H_{n,j} p^{sj} (xy)^j 
\right)
\right| 
+2 p^{3s} . 
\end{align*}
Recalling~\eqref{eq:assumptions}, we conclude that 
\begin{equation}\label{eq:split}
|S_{p^t}(N;\vu,\vv)| \leq\frac{1}{p^{2s}}\sum_{n=0}^{N-1}    \left| \sigma_n
\right| +2N^{3/4},
\end{equation}
where
$$
\sigma_n  =\sum_{x,y=1}^{p^s}\e(f_n(x,y))
$$
and 
$$
f_n(x,y)= \frac{1}{p^t \, r! \det A}   \sum_{j=0}^r   H_{n,j} p^{sj} (xy)^j .
$$
Let us fix the index $n$ and write
$$
f_n(x,y)=\frac{a_r}{q_r}(xy)^r+\ldots + \frac{a_1}{q_1} xy+ \frac{a_0}{q_0},
$$
where the integers $a_j$ and $q_j$ are defined by
$$
\frac{H_{n,j}p^{sj}}{p^t \, r! \det A } = \frac{a_j}{q_j} \mand \gcd(a_j,q_j)=1, \qquad j=0,\ldots, r.
$$
Put
$$
\omega_j = \nu_p (H_{n,j}), \qquad j=0,\ldots, r,
$$
then
\begin{equation}\label{eq:q_j}
   p^{t-sj-\omega_j}\leq   q_j\leq r !  p^{t-sj-\omega_j}\det A, \qquad j=0,\ldots, r.
\end{equation}

By Lemma~\ref{lem:ford}, with the above choice of $k$, we have
$$
N_{k,r}(p^s)\leq r^{3r^3}p^{2sk-sr(r+1)/2 +\delta_r s r^2}
$$ 
with some   $\delta_r \leq 1/(1000 d)$.
Then by Lemma~\ref{lem:Kor},
\begin{equation}\label{eq:sigma-n}
\begin{split}
|\sigma_n|^{2k^2}\leq \(64k^2\log(3Q)\)^{r/2}  r^{3r^3} &
p^{4sk^2-sr(r+1)/2 +\delta_r s r^2 } \\
& \times   \prod_{j=1}^r \min \left\{p^{s j},\sqrt{q_j}+\frac{p^{sj}}{\sqrt{q_j}} \right\},
\end{split} \end{equation} 
where 
$$
Q=\max\{q_j:~1\le j \le r\}.
$$
By \eqref{eq:q_j}, we have $Q\leq r!  p^t\det A$ and thus
\begin{equation}\label{eq:log 3Q}
\log(3Q)\leq \log\left(3  r!  p^t  \det A\right) \leq tr \log (3 r p \det A).
 \end{equation}  
 By the choice of $r$, we have $s(r-1) < t \leq sr$. For $j$ with
$$
\frac{r+1}{2}\leq j < r
$$
we have by~\eqref{eq:q_j} that
$$
q_j \leq r!  p^{s(r-j)-\omega_j} \det A \leq r! p^{sj}  \det A \mand q_j\geq p^{s(r-j-1)-\omega_j} .
$$
Thus
$$
\frac{\sqrt{q_j}}{p^{sj}} + \frac{1}{\sqrt{q_j}}\leq \frac{1+r! \det A}{\sqrt{q_j}}\leq r^r p^{-\frac{s}{2}(r-j-1) + \omega_j/2} \det A .
$$
Put
$$
\lambda = \fl{\frac{r-1}{2d}}. 
$$
Then by Lemma~\ref{lem:expansion}, there are $j_1,\ldots, j_\lambda$ such that
\begin{equation}\label{eq:j_i 1}
\omega_{j_i}\leq w +\nu_p(r!)
\end{equation}
and 
\begin{equation}\label{eq:j_i 2}
 \frac{r+1}{2} + (i-1)d\leq j_i \leq \frac{r+1}{2} + id.
\end{equation} 
As 
$$
\nu_p(r!)\leq \left\lfloor\frac{r}{p}
  \right\rfloor + \left\lfloor\frac{r}{p^2}  \right\rfloor + \ldots\leq \frac{r}{p-1}\leq r,
$$
using~\eqref{eq:j_i 1}, we get
\begin{align*}
 \prod_{j=1}^r & \min\left\{p^{s j},\sqrt{q_j}+\frac{p^{sj}}{\sqrt{q_j}} \right\} \\
  &\qquad = p^{sr(r+1)/2  }     \prod_{j=1}^r \min\left\{1,\frac{\sqrt{q_{j}}}{p^{sj}}+\frac{1}{\sqrt{q_{j}}} \right\} \\
 &\qquad \leq p^{sr(r+1)/2  }     \prod_{(r+1)/2 \le j  \le r} \min\left\{1,\frac{\sqrt{q_{j}}}{p^{sj}}+\frac{1}{\sqrt{q_{j}}} \right\} \\
&\qquad  \leq  p^{sr(r+1)/2} \prod_{i=1}^{\lambda}\(r^r p^{-s(r-j_i-1)/2 + (w+r)/2}  \det A \)  .
\end{align*}

Next, by~\eqref{eq:j_i 2}, we have
$$
r-j_i-1 \ge  (r-3)/2 -id, \qquad i =1, \ldots, \lambda.
$$
We also notice that by~\eqref{eq:assumptions}
$$
(w+r)/2 \le  s. 
$$
Therefore,
\begin{align*}
 \prod_{i=1}^{\lambda} p^{-s(r-j_i-1)/2 + (w+r)/2} &\leq \prod_{i=1}^{\lambda} p^{-s((r-3)/2 -id )/2 + s}\\
& =
 p^{-\lambda s(r-3 - d(\lambda+1) )/4 + \lambda s }
\leq p^{-\lambda s r /4}\leq p^{- s r^2 /(16d)}, 
\end{align*}
since our assumption~\eqref{eq:large r} implies that $\lambda > r/(4d)$.

Then, substituting this bound and the bound~\eqref{eq:log 3Q} in~\eqref{eq:sigma-n}, we derive 
\begin{align*}
|\sigma_n&|^{2k^2}\\
&\leq \(2400 r^5 t (\log d)^2 \log (3rp \det A)\)^{r/2} r^{3r^3+r\lambda}(\det A)^{\lambda} 
p^{4sk^2-\frac{sr^2}{16 d} + \delta_r  s r^2  }\\
&\leq \(2400 r^5 t (\log d)^2 \log (3rp \det A)\)^{r/2} r^{3r^3+r\lambda} (\det A)^{\lambda}
p^{4sk^2-\frac{123}{2000d}s r^2} \\
& =    \(2400 r^5 (\log d)^2 \log (3rp \det A)\)^{r/2} r^{3r^3+r\lambda} (\det A)^{\lambda} t^{r/2}
p^{4sk^2-\frac{123}{2000d}s r^2}
\end{align*}
and therefore, using that 
$$
\frac{123}{2\cdot 6^2 \cdot 2000} > \frac{1}{1500}, 
$$
we conclude 
\begin{equation}\label{eq:sigma_n}
\begin{split}
 |\sigma_n| \ll 
\(\(1024 r^5 \log (3rp \det A)\)^{r/2} r^{3r^3+r\lambda} (\det A)^{\lambda}\)^{1/(2k^2)}   \quad&
\\ t^{r/(4k^2)} p^{2s - \frac{1}{1500d} \frac{s}{r^2 (\log d)^2}}.&
  \end{split} 
\end{equation} 

As  $r \le  s/t$, we have 
$$\frac{s}{r^2   }\geq\frac{s^3}{t^2   }\geq \frac{(\log N)^3}{512 t^2 (\log p)^3} 
= \frac{\log N}{512  \log p} \rho ^2.
$$
Hence 
$$
p^{s/(r\log d)^2}
\geq N^{\frac{1}{512 (\log d)^2 } \rho ^2 }.
$$ 
On the other hand, taking $c_0$ large enough so that $r \ge 35$, we see that 
$$
r \ge \frac{35}{36} (r+1)  \ge  \frac{35 t}{36 s} .
$$

Increasing $c$ if necessary, we can assume, that $90d \leq t/\log t$ as otherwise the result is trivial.

Therefore, 
\begin{align*}
\frac{r}{k^2} &  \leq \frac{1}{36 r^3 (\log d)^2}\leq \frac{s^3}{35 t^3(\log d)^2}\\
& \leq  \frac{1}{35 \cdot 4^3  \cdot (\log d)^2} \frac{(\log N)^3}{t^3 (\log p)^3} \le \frac{\log N}{1575\cdot 2^7 d\log t \log p (\log d)^2}\rho^2
\end{align*}
and thus
$$
t^{r/(4k^2)}\leq N^{  \frac{1}{1575\cdot 2^9 d (\log d)^2} \rho^2}
$$
and
$$
\( \(1024 r^5\log (3rp \det A)\)^{r/2} r^{3r^3+r\lambda} (\det A)^{\lambda}\)^{1/(2k^2)}
\ll 1.
$$

Hence, by~\eqref{eq:sigma_n} we have
$$
  |\sigma_n| \ll p^{2s} N^{-\eta_0 \rho^2 / (d  (\log d)^2)}
$$
for some absolute $\eta_0>0$ if $t$ is large enough.

Then it follows from~\eqref{eq:split},
$$
S_{p^t}(N; \vu, \vv)\ll N^{1- \eta_0 \rho^2 / (d(\log d)^2)} + N^{3/4}\leq N^{1- \eta \rho^2 / d (\log d)^2)}.
$$

If $r\leq c_0 d$, define
\begin{equation}\label{eq:N_0N}
N_0 = \left\lfloor N^{r/(4c_0d) } \right\rfloor \mand \rho_0 = \frac{\log N_0}{t \log p}> \frac{1}{8c_0 d}\rho.
\end{equation}

Then
$$
S_{p^t}(N; \vu, \vv) \leq \sum_{0\leq k < N/N_0} 
\left| S_{p^t}(N_0; A^{kN_0}\vu, \vv) \right|  + N_0.
$$
As the pair $(A^{kN_0}\vu, \vv )$ is also proper, applying the previous argument to the inner terms, we get
$$
S_{p^t}(N; \vu, \vv) \ll \frac{N}{N_0}N_0^{1-\eta \rho_0^2}  + N_0
\ll N^{1-\frac{\eta}{(8c_0d)^3} \frac{\rho^2}{d (\log d)^2}}
$$
by \eqref{eq:N_0N}. Thus, replacing $\eta$ by $\eta /(8c_0)^3$, we conclude the proof.

\subsection{Proof of Theorem~\ref{thm:discrepancy}} 
We take $V = \fl{N^{1/4}}$ and use the trivial 
observations that for any $\vv \in\Z^d$ with $0<\|\vv\|\le V$ as in Lemma~\ref{lem:K-S}
we have
$$
\gcd(v_1,\ldots,v_d, p^t) = p^\nu \le N^{1/4}
$$
with some integer $\nu$.  For such $\vv$ we have 
$$
S_{p^t}(N; \vu,\vv) = S_{p^{t-\nu}}\(N; \vu, p^{-\nu} \vv \).
$$

If $N\le \tau_{t-\nu}$, then we apply the bound of Theorem~\ref{thm:main}
directly to the sum $S_{p^{t-\nu}}\(N; \vu, p^{-\nu}\vv\)$ and note that 
$$
\frac{\log N}{(t -\nu)\log p} \ge \rho,
$$
hence the resulting bound is even getting stronger when $\nu$ increases. 

If $N> \tau_{t-\nu}$ then using periodicity, we write 
$$
 S_{p^{t-\nu}}\(N; \vu,\vv p^{-\nu}\)
 =Q S_{p^{t-\nu}}\(\tau_{t-\nu}; \vu,\vv p^{-\nu}\) + S_{p^{t-\nu}}\(R; \vu,\vv p^{-\nu}\), 
 $$
 where 
 $$
 Q = \fl{N/ \tau_{t-\nu}} \mand R = N - Q\tau_{t-\nu}.
 $$
 An apply Theorem~\ref{thm:main} again to each sum, where we also observe 
 that by Lemma~\ref{eq: tau*}
 $$
 \tau_{t-\nu} \gg p^{t-\nu} \ge p^{t} N^{-1/4} \gg N^{3/4}.
 $$
 Using Lemma~\ref{lem:K-S},  after simple calculations we obtain the desired result.

\section{Comments}  

Here we are mostly interested in short segments of the sequence~\eqref{eq:u=Au},
which is more challenging aspect of the problem and is also more important from the practical point 
of view. However longer segments of length $N$ which is comparable with $\tau_t$ are also of interest. 
It is quite plausible  that the argument of~\cite{Shp} can also be used to get the following bound for 
twisted sums
$$
\sum_{n=0}^{\tau_t-1}\e \left(\frac{1}{p^{t}} \vv A^n \vu\right) \e\(bn/\tau_t\) \ll \tau_t^{1-1/(d+1) + o(1)},
$$
which is similar to~\eqref{eq:full per exp} with some small loses in the exponent. 
In turn, together with the completing technique, see~\cite[Section~12.2]{IwKow}, this bound yields 
$$
S_{p^t}(N; \vu,\vv) \ll \tau_t^{1-1/(d+1) + o(1)}
$$
and a similar bound on the discrepancy.

We note that another natural class of moduli to consider besides powers of 
fixed primes are prime moduli $p$ for a large prime $p$. 
If the characteristic polynomial of $A$ splits completely modulo $p$, then the result of 
Bourgain~\cite[Proposition~3]{Bou1} allows to consider segments within the full period
 of length $p^\delta$, for any fixed $\delta>0$. It is very likely that coupled with ideas 
 of~\cite{Bou2} this can be reduced to segments of length $\exp\(c \log p/ \log \log p\)$ 
 for some absolute constant $c>0$. However these length remain much larger 
 in terms of the modulus and the period the admissible lengths in this work. 

\section*{Acknowledgement}
During the preparation of this  work L.~M. was partially supported by the Austrian Science Fund FWF Projects F 5506, which is part of the Special Research Program ”Quasi-Monte Carlo Methods: Theory and Applications and by NRDI (National Research Development and Innovation Office, Hungary) grant FK 142960, and
I.~S. by the Australian Research Council Grants DP170100786 and DP200100355.

\end{document}